\def\h{ {\cal H} }
\def\a{ {\cal A} }
\def\b{ {\cal B} }
\def\u{ {\cal U} }
\def\v{ {\cal V} }
\def\ii{ {\cal I} }
\def\s{ {\cal S} }
\def\p{ {\cal P} }
\def\o{ {\cal O} }
\def\xx{ {\bf x} }
\def\vv{ {\bf v} }
\def\XX{ {\bf X} }
\def\YY{ {\bf Y} }
\def\AA{ {\bf A} }
\def\GG{ {\bf G} }
\def\ZZ{ {\bf Z} }
\def\VV{ {\bf V} }
\def\UU{ {\bf U} }
\def\TI { \left({\rm T}\ii(\h)\right)}
\newtheorem{teo}{Theorem}[section]
\newtheorem{prop}[teo]{Proposition}
\newtheorem{lem}[teo]{Lemma}
\newtheorem{coro}[teo]{Corollary}
\newtheorem{defi}[teo]{Definition}
\theoremstyle{definition}
\newtheorem{rem}[teo]{Remark}
\newtheorem{ejem}[teo]{Example}
\title{The set of partial isometries as a quotient Finsler space}
\author{E. Andruchow\footnote{{\sc  {Instituto Argentino de Matem\'atica, `Alberto P. Calder\'on', CONICET, Saavedra 15 3er. piso,
(1083) Buenos Aires, Argentina }} and {{\sc Universidad Nacional de General Sarmiento, J.M. Gutierrez 1150, (1613) Los Polvorines, Argentina}} e-mail: eandruch@campus.ungs.edu.ar}}
\begin{document}

\maketitle 

\begin{abstract}
A known general  program, designed to endow the quotient space $\u_\a / \u_\b$ of the unitary groups $\u_\a$, $\u_\b$ of the C$^*$ algebras $\b\subset\a$ with an invariant Finsler metric,  is applied to obtain a  metric for the space $\ii(\h)$ of partial isometries of a Hilbert space $\h$. $\ii(\h)$  is a quotient of the unitary group of $\b(\h)\times\b(\h)$, where $\b(\h)$ is the algebra of bounded linear operators in $\h$. Under this program, the solution of a linear best approximation problem leads to the computation of minimal geodesics in the quotient space. We find solutions of this best approximation problem, and study properties of the minimal geodesics obtained.  \end{abstract}

\bigskip

{\bf 2020 MSC: 47A05, 58B20}  

{\bf Keywords: partial isometries, Finsler metric, minimal curves}

\section{Introduction}
Let $\h$ be a complex Hilbert space, a {\it partial isometry} $V$ in $\h$ is an operator which is an isometry $V:\s_i\to\s_f$ between two closed subspaces $\s_i, \s_f\subset\h$ (called {\it initial} and {\it final} subspaces of $V$, respectively), and is zero on $\s_i^\perp$. Algebraically, this is equivalent to $VV^*V=V$, and in this case $V^*V$ and $VV^*$ are the orthogonal projections onto the spaces $\s_i$ and $\s_f$. Denote by 
$$
\ii(\h)=\{\hbox{partial isometries in }\h\}.
$$
The geometry of this set was thoroughly studied. Starting with Halmos and Mc Laughlin \cite{halmos mclaughlin}, who characterized the connected components.  Later on, other papers appeared studying geometric or topological aspects of the set of partial isometries, for instance: \cite{kovarik}, \cite{mbekhta1}, \cite{mbekhta2}, \cite{ac illinois}, \cite{lincei}, \cite{metric vna}, \cite{ch1}, \cite{ch2}.

Perhaps the main feature of $\ii(\h)$ is the left action of the group $\u(\h)\times \u(\h)$, where $\u(\h)$ denotes the unitary group of $\h$:
\begin{equation}\label{accion}
(U,W)\cdot V=UVW^* \ , \ \ (U,W)\in\u(\h)\times\u(\h) , \ V\in\ii(\h).
\end{equation}

The purpose of this note is to apply the program  by C. Dur\'an, L. Mata-Lorenzo and  L. Recht \cite{duranmatarecht}, devised for the study  of curves of minimal length in quotient  spaces of the group of unitary elements in a C$^*$-algebra, to the space $\ii(\h)$. 
Indeed, $\ii(\h)$ is a quotient of the unitary group of the C$^*$-algebra $\b(\h)\times\b(\h)$. The program in \cite{duranmatarecht} proceeds (roughly) as follows. If $M$ admits the transitive action of the unitary group $\u_\a$ of the C$^*$-algebra $\a$, then $M$ can be regarded as a quotient $\u_\a / G$ for $G$ a Banach-Lie subgroup of $\u_\a$. The program requires that $G$ be the unitary group of a unital sub-C$^*$-algebra $\b\subset\a$ (though this requirement can be sometimes relaxed or bypassed, as it is done here). Therefore, the tangent spaces ${\rm T}M$ are naturally isomorphic to the quotient of the (real) Banach spaces $\a_h / \b_h$, where $\b_h\subset\a_h$ denote the  sets of selfadjoint elements of $\b$ and $\a$, respectively. Using this isomorphism, Dur\'an, Mata-Lorenzo and Recht \cite{duranmatarecht} endowed ${\rm T} M$ with quotient metric of $\a_h / \b_h$. The metric, by design,  is invariant under the action of $\u_\a$ on $M$.  Therefore, tangent vectors can be lifted to selfadjoint elements in $\a$: the norm of such a vector is given by the infimum of the norms (measured in $\a$) of all possible liftings. Their main result states that if $m\in M$ and $v\in({\rm T}M)_m$ are given, and one can find a lifting $x_0\in\a_h$ of $v$, whose norm $\|x_0\|$ attains the infimum of all possible liftings of $v$, then the curve obtained as the uniparametric subgroup $e^{itx_0}$ acting on $m$, which at $t=0$ passes through $m$ with velocity $v$, has minimal length for this metric, at least for time $|t|\le\frac{\pi}{2\|x_0\|}$. Such liftings $x_0$ are called {\it minimal liftings},  their existence is not guaranteed, and even when they do exist, their characterization is an interesting problem, even in the case of finite dimensional algebras (i.e., matrix algebras): see for instance \cite{laa}, \cite{bottazzivarela}, \cite{kloboukvarela}. This problem is also related with non-commutative C$^*$-metrics and Leibniz seminorms \cite{rieffel}. An important background to the present work are the papers by E. Chiumiento \cite{ch1}, \cite{ch2}. In these papers, quotient metrics and minimal liftings are studied in the orbits of partial isometries under the action of the so called restricted groups of unitaries (i.e., unitaries which are of the form $1+K$, for $K$ in an operator ideal). 

Therefore, in dealing with particular examples, as is the case here, the focus is on the computation of such minimal liftings. 

Another antecedent of this aproach can be found in \cite{isometrias}, where the space of isometries was studied, though not with the quotient norm considered here

The contents of this note are the following. In Section 2 we state the basic facts on the space $\ii(\h)$ and the action of $\u(\h)\times\u(\h)$. In Section 3 we present an embedding of $\ii(\h)$ into the manifold of selfadjoint elements $\epsilon$ of $\h\times\h$ which satisfy $\epsilon^3=\epsilon$, i.e., are operators of the form $\epsilon=E_+-E_-$, with $E_+, E_-$ mutually orthogonal projections. In Section 4 we recall from \cite{duranmatarecht} the program of Dur\'an, Mata-Lorenzo and Recht. Following these ideas, in Section 5 we introduce  Finsler metrics in $\ii(\h)$ and in the space of the operators $\epsilon$ described in Section 3; with these metrics, the embedding of Section 3 is isometric. We show that the Finsler norm in ${\rm T}\ii(\h)$ is equivalent to the usual operator norm in $\b(\h)$.  We also prove the main theorem of this note: that curves obtained by the method of \cite{duranmatarecht} are not only minimal in $\ii(\h)$, but also minimal in the bigger manilfold of the operators $\epsilon$ of Sectin 3. In Section 6 we consider the initial and final projections maps
$$
\alpha(V)=V^*V \ , \ \ \omega(V)=VV^*.
$$
It is shown that if the set of projections is considered with its natural Finsler metric (see \cite{cpr}), both maps are distance decreasing.

\section{Preliminaries}
Let us recall the basic facts of the space $\ii(\h)$ and the action (\ref{accion}):
\begin{rem}
Let $V, V_0, V_1, V_2\in\ii(\h)$.
\noindent

\begin{enumerate}
\item
The connected components of $\ii(\h)$ are parametrized by three non negative integers $\le +\infty$: 
$$
r(V)=\dim R(V) , \ n(V)=\dim N(V) \hbox{ and } r^\perp(V)=\dim R(V)^\perp.
$$
Namely, $V_1, V_2\in\ii(\h)$ belong to the same connected component if and only if $r(V_1)=r(V_2)$, $n(V_1)=n(V_2)$ and $r^\perp(V_1)=r^\perp(V_2)$ (see \cite{halmos mclaughlin}). 
\item
If $\|V_1-V_2\|<1$, then $V_1$ and $V_2$ lie in the same connected component of $\ii(\h)$ see \cite{halmos mclaughlin}.  

\item
These components coincide with the orbits of the action (\ref{accion}): $V_1, V_2$ lie in the same component if and only if there exist $U,W\in\u(\h)$ such that $UV_1W^*=V_2$.
\item
More recently, in \cite{ac illinois} we considered the set of partial isometries of a C$^*$-algebra, as a homogeneous manifold. Each connected component / orbit, is a $C^\infty$ complemented  submanifold of the algebra. Back to the case when the algebra is $\b(\h)$, if $\ii(\h)_{V_0}$ denotes the connected component of $V_0$, then the map
\begin{equation}\label{el mapa}
\pi_{V_0}:\u(\h)\times\u(\h)\to\ii(\h)_{V_0} , \ \pi_{V_0}(U,W)=UV_0W^*
\end{equation}
is a $C^\infty$-submersion (see \cite{ac illinois}). Note then that the whole space $\ii(\h)$ is a discrete union of complemented submanifolds (any two different components lying at distance of at least $1$), and therefore is  itself a complemented submanifold of $\b(\h)$. 
\item
Given $V_0$, the subgroup of elements in $\u(\h)\times\u(\h)$ which fix the element $V_0$, usually called the {\it isotropy subgroup} of $V_0$, is given by
\begin{equation}\label{grupo i}
{\rm I}_{V_0}=\{(G,H)\in\u(\h)\times\u(\h): GV_0=V_0H\}.
\end{equation}
It is a $C^\infty$ Banach-Lie group, whose Banach-Lie algebra is 
\begin{equation}\label{algebra i}
\iota_{V_0}=\{ (iX,iY)\in\b(\h)\times\b(\h): X^*=X, Y^*=Y \hbox{ and } XV_0=V_0Y\}.
\end{equation}

\end{enumerate}

\end{rem}

$V\in\ii(\h)$ is called {\it balanced} if $n(V)=r^\perp(V)$. This type of partial isometries comprise full connected components of $\ii(\h)$. Note that $V\in\ii(\h)$ is balanced if and only if there exists an orthogonal projection $P$ and unitaries $U,W$  such that $V=UPW^*$. Indeed,  if $V=UPW^*$, then $V$ and $P$ lie in the same connected component, and therefore, by Halmos-McLaughlin's characterization, $n(V)=n(P)=r^\perp(P)=r^\perp(V)$. The converse statement is clear.

Non-unitary isometries are examples of non balanced partial isometries.

The purpose of this note, is to introduce a natural invariant Finsler metric in $\ii(\h)$. That is, a metric  $|\ \ |_V$ in each tangent  space $\TI_V$, which is invariant under the action of $\u(\h)\times\u(\h)$: if $\v\in\TI_V$ and $U,W\in\u(\h)$, then
$$
|\v|_V=|U\v W^*|_{UVW^*}.
$$
Here, note that the action is linear: for fixed $U,W\in\u(\h)$, the map $V\mapsto UVW^*$ is the restriction of a global bounded linear map $X\mapsto UX W^*$.

It will be useful to recall the form of the tangent spaces of $\ii(\h)$. The fact that the map (\ref{el mapa}) is a submersion, implies that its tangent maps are surjective. Then
$$
({\rm T}\pi_{V_0})_{(1,1)}: (T\u(\h)\times\u(\h))_{(1,1)}=\{(iX,iY): X^*=X, Y^*=Y\}\to \TI_{V_0}, 
$$
$$
({\rm T}\pi_{V_0})_{(1,1)}((iX,iY)=iXV_0-iV_0Y
$$
is surjective, and
\begin{equation}\label{tangente I}
\TI_{V_0}=\{iXV_0-iV_0Y: X^*=X, Y^*=Y\}.
\end{equation}
The metric that will be considered in ${\rm T}\ii(\h)$ is a {\it quotient metric}, using the homogeneous structure of $\ii(\h)$ (as a quotient of the group $\u(\h)\times\u(\h)$), following the program outlined in the seminal paper by Dur\'an, Mata-Lorenzo and Recht \cite{duranmatarecht}. We shall describe it in Section \ref{programa}.

Following this program, one can compute curves of minimal length ({\it metric geodesics}) of this Finsler metric, by finding {\it minimal liftings} of tangent vectors in ${\rm T}\ii(\h)$. 

\section{$2\times 2$ model for $\ii(\h)$}

Given $V\in\ii(\h)$, consider $\epsilon_V\in\b(\h\times\h)$ given by
$$
\epsilon_V=\left(\begin{array}{cc} 0 & V \\ V^* & 0 \end{array} \right).
$$
Note that $\epsilon_V^*=\epsilon_V$,
$$
\epsilon_V^2=\left(\begin{array}{cc} VV^* & 0 \\ 0 & V^*V \end{array} \right),
$$
where $V^*V$ and $VV^*$ are the initial and final projections of $V$, and that
$$
\epsilon_V^3=\left(\begin{array}{cc} 0 & VV^*V  \\ V^*VV^* & 0 \end{array} \right)=\left(\begin{array}{cc}  0 & V \\ V^* & 0 \end{array} \right)=\epsilon_V
$$
It follows that $\epsilon=\epsilon_V$ is a selfadjoint root of the polynomial $\xx^3-\xx$, and therefore has a simple spectral decomposition of the form
$$
\epsilon_V= 0\cdot E_0+ 1\cdot E_+-1\cdot E_-=E_+-E_-,
$$
with 
$$
E_+=\frac12\{\epsilon_V^2+\epsilon_V\}, \ E_-=\frac12\{\epsilon_V^2-\epsilon_V\} \ \hbox{ and } 
E_0=1-\epsilon_V^2,
$$
the mutually orthogonal spectral projections of $\epsilon_V$.
\begin{rem}
Consider $\epsilon=\epsilon^*$ with $\epsilon^3=\epsilon$.
The unitary orbit of $\epsilon$, under the inner action of the unitary group of $\h\times\h$,
$$
\o_\epsilon=\{\UU\ \epsilon\ \UU^*: \UU=\left(\begin{array}{cc} U_{11} & U_{12} \\ U_{21} & U_{22} \end{array} \right) \in\u(\h\times\h)\},
$$
is a complemented $C^\infty$-submanifold of $\b(\h\times\h)$, and a homogeneous space of the unitary group $\u(\h\times\h)$ (see \cite{cpr systems}). The isotropy group of $\epsilon$ is given by
$$
{\rm I}_{\epsilon}=\{\GG=\left(\begin{array}{cc} G_{11} & G_{12} \\ G_{21} & G_{22} \end{array} \right) \in\u(\h\times\h): \GG \ \epsilon=\epsilon \ \GG\}.
$$
The Banach-Lie algebra, in the special case where $\epsilon=\epsilon_V$ for some $V\in\ii(\h)$, is
$$
\iota_{\epsilon_V}=\Big\{i\XX=i\left(\begin{array}{cc} X_{11} & X_{12} \\ X_{12}^* & X_{22} \end{array} \right) \in\b(\h\times\h): X_{ii}^*=X_{ii}  \hbox{ and }\left\{\begin{array}{l} X_{12}V^*=VX_{12}^*, \\  X_{12}^*V=V^*X_{12}, \\  X_{11}V=VX_{22}\end{array}\right. \Big\}.
$$
\end{rem}
If we restrict the above inner action (on $\epsilon_V$) to the diagonal subgroup
$$
\Delta=\Big\{\left( \begin{array}{cc} U & 0 \\ 0 & W \end{array} \right) : U, W\in\u(\h)\Big\}\subset \u(\h\times\h),
$$
we obtain a copy of the connected component (orbit) of $V$:
\begin{prop}
Let $V\in\ii(\h)$, then
$$
\Delta\cdot \epsilon_V=\Big\{\left( \begin{array}{cc} 0 & UVW^* \\ (UVW^*)^* &  0 \end{array}\right): U,W\in\u(\h)\Big\}=\{\epsilon_{UVW^*}: U,W\in\u(\h)\}\simeq \ii_V.
$$
\end{prop}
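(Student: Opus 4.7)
The proposition is essentially a direct computation plus an identification of orbits, so my plan is to keep it short and transparent rather than invoke machinery.

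First, I would perform the $2\times 2$ block multiplication explicitly. For $\UU=\left(\begin{smallmatrix} U & 0 \\ 0 & W \end{smallmatrix}\right)\in\Delta$ one has
\begin{equation*}
\UU\,\epsilon_V\,\UU^*=\left(\begin{array}{cc} U & 0 \\ 0 & W \end{array}\right)\left(\begin{array}{cc} 0 & V \\ V^* & 0 \end{array}\right)\left(\begin{array}{cc} U^* & 0 \\ 0 & W^* \end{array}\right)=\left(\begin{array}{cc} 0 & UVW^* \\ WV^*U^* & 0 \end{array}\right).
\end{equation*}
Since $WV^*U^*=(UVW^*)^*$, the right-hand side is precisely $\epsilon_{UVW^*}$. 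This gives both the first and the second equalities of the display simultaneously: the first by definition of the $\Delta$-orbit, and the second by definition of $\epsilon_{V'}$ with $V'=UVW^*$.

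For the final identification $\{\epsilon_{UVW^*}:U,W\in\u(\h)\}\simeq \ii_V$, I would use two ingredients already stated in Section 2. By Remark 2.1(3) the orbit $\{UVW^*:U,W\in\u(\h)\}$ coincides with the connected component $\ii_V$ of $V$ in $\ii(\h)$. On the other hand, the assignment $V'\mapsto\epsilon_{V'}$ is manifestly injective, since one recovers $V'$ as the $(1,2)$-block of $\epsilon_{V'}$ (and the map is clearly well-defined into the set of $2\times 2$ off-diagonal self-adjoint operators). Composing these two bijections yields the claimed homeomorphism, and in fact one can record that it is real-analytic, being the restriction of the block-embedding $X\mapsto\left(\begin{smallmatrix} 0 & X \\ X^* & 0 \end{smallmatrix}\right)$ of $\b(\h)$ into $\b(\h\times\h)$.

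There is no real obstacle here; the only mildly delicate point is to be explicit about the meaning of the symbol ``$\simeq$'' — whether as sets, as topological spaces, or as manifolds. Since the block-embedding $X\mapsto\left(\begin{smallmatrix} 0 & X \\ X^* & 0 \end{smallmatrix}\right)$ is an isometric linear embedding of $\b(\h)$ (viewed as a real Banach space) into $\b(\h\times\h)_h$, its restriction to $\ii_V$ is a smooth diffeomorphism onto its image, so all reasonable interpretations coincide; I would simply state this bijection explicitly after the computation.
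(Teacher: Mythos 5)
Your proposal is correct and follows the same route as the paper: the paper's proof consists solely of the block multiplication $\UU\,\epsilon_V\,\UU^*=\epsilon_{UVW^*}$, which you reproduce exactly. The extra remarks you add — invoking the orbit description of the connected component and the injectivity of $V'\mapsto\epsilon_{V'}$ to justify the symbol $\simeq$ — are a harmless (and slightly more careful) elaboration of what the paper leaves implicit.
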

\begin{proof}
It is a straightforward computation:
$$
\left( \begin{array}{cc} U & 0 \\ 0 & W \end{array} \right) \left( \begin{array}{cc}
0 & V \\ V^* & 0 \end{array}\right)\left( \begin{array}{cc} U^* & 0 \\ 0 & W^* \end{array} \right)=\left( \begin{array}{cc}
0 & UVW^* \\ WV^*U^* & 0 \end{array}\right)
$$
\end{proof}
Note that 
$$
(T\Delta\cdot\epsilon_V)_{\epsilon_V}=\Big\{\left( \begin{array}{cc} 0 & iXV-iVY \\ iYV^*
-iV^*X &  0 \end{array}\right): X^*=X, Y^*=Y\Big\}.
$$
\section{The program of Dur\'an, Mata-Lorenzo and Recht}\label{programa}

Let us briefly describe the context and main result of \cite{duranmatarecht}. Let $\b\subset\a$ be unital C$^*$-algebras (with the same unit). Denote by $\u_\b$ and $\u_\a$ the unitary groups of $\b$ and $\a$, respectively. In \cite{duranmatarecht} a metric was introduced  in the homogeneous (quotient) space $M=\u_\a \ / \ \u_\b$. If $u\in\u_\a$, let $[u]\in M$ be the class of $u$ in the quotient space. The Banach-Lie algebras of $\u_\a$ and $\u_\b$ are, respectively
$$
\mathfrak{u}_\a=\{ix: x^*=x\in\a\} , \ \mathfrak{u}_\b=\{iy: y^*=y\in\a\}.
$$
For $[u]\in M$, let
$$
\pi_{[u]}:\u_\a\to M , \ \pi_{[u]}(w)=[uw].
$$

The tangent space
of $M$ at $[u]$  naturally identifies with the quotient of the Lie algebras of $\u_\a$ and $\u_\b$:
$$
({\rm }T M)_{[u]}\simeq \mathfrak{u}_\a \ / \ \mathfrak{u}_\b,
$$
since $({\rm T}\pi_{[u]})_{[1]}:\mathfrak{u}_\a\to({\rm T}M)_{[u]}$ is an epimorphism with nullspace $\mathfrak{u}_\b$.
In this tangent space they define the natural metric:
\begin{defi}
If $\vv=ix+\mathfrak{u}_\b \in (TM)_{[u]}$
\begin{equation}\label{norma en 1}
|\vv|_{[u]}:=\inf\{\|x+y\|: y^*=y\in\b\},
\end{equation}
i.e., the usual metric in the quotient of (real) Banach spaces $\mathfrak{u}_\a \ / \ \mathfrak{u}_\b$. 

\end{defi}
\begin{defi}
Let $\vv\in(TM)_{[u]}$. An element $x_0=x_0^*\in\a$ is a {\it minimal lifting} of $\vv$ if 
$$
\vv=i x_0+\mathfrak{u}_\b
$$
and
$$
\|x_0\|=\inf\{\|x_0+y\|: y=y^*\in\b\}=|\vv|_{[u]}.
$$
That is, $x_0$ attains the norm of the class in the quotient norm.
\end{defi}

In general, minimal liftings may not exist (see for instance the paper \cite{bottazzivarela} for an interesting example). However, when they do exist, they provide curves of minimal length in $M$:
\begin{teo}\label{teo dmr} {\rm (Dur\'an, Mata-Lorenzo, Recht \cite{duranmatarecht})}

Let $[u]\in M$ and $\vv\in(TM)_{[u]}$. Suppose that $\vv$ has a minimal lifting $x_0$. Then the curve
$$
\delta(t)=[e^{itx_0}u]
$$ 
which satisfies the initial conditions
$$
\delta(0)=[u] \ \hbox{ and } \ \dot{\delta}(0)=\vv,
$$
has minimal length along its path for $|t|\le \frac{\pi}{2|\vv|_{[u]}}=\frac{\pi}{2\|x_0\|}$.
\end{teo}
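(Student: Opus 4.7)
My plan is to compute the length of $\delta$ directly and then to bound the length of any competing curve from below using a linear functional supporting $x_0$; the latter is the heart of the argument.

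First, because $\delta(t+s)=e^{isx_0}\cdot\delta(t)$ and the Finsler norm on $TM$ is, by design, invariant under the left action of $\u_\a$, the speed $|\dot\delta(t)|_{\delta(t)}$ is constant and equal to $|\vv|_{[u]}=\|x_0\|$ (the last equality being the minimality hypothesis on $x_0$). Hence $\mathrm{length}(\delta|_{[0,t_0]})=t_0\|x_0\|$. Now let $\gamma:[0,t_0]\to M$ be any piecewise-$C^1$ curve with $\gamma(0)=[u]$ and $\gamma(t_0)=\delta(t_0)$. Lift $\gamma$ through $\pi:\u_\a\to M$ to $\Gamma:[0,t_0]\to\u_\a$ with $\Gamma(0)=u$, and write $\dot\Gamma(t)=iY(t)\Gamma(t)$ for some $Y(t)\in\a_{sa}$. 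Unwinding the quotient norm gives $|\dot\gamma(t)|_{\gamma(t)}=\inf_{c\in\b_{sa}}\|Y(t)+c\|$, so
\[
\mathrm{length}(\gamma)=\int_0^{t_0}\inf_{c\in\b_{sa}}\|Y(t)+c\|\,dt,
\]
and the endpoint condition reads $\Gamma(t_0)u^{-1}\in\u_\b\cdot e^{it_0x_0}$.

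The minimality $\|x_0\|=\mathrm{dist}(x_0,\b_{sa})$ yields, via Hahn--Banach, a norm-one real-linear functional $\varphi$ on $\a_{sa}$ with $\varphi|_{\b_{sa}}=0$ and $\varphi(x_0)=\|x_0\|$. Since $\varphi$ kills $\b_{sa}$, one has $|\varphi(Y(t))|\le\inf_c\|Y(t)+c\|$ pointwise, and therefore
\[
\mathrm{length}(\gamma)\ \ge\ \int_0^{t_0}|\varphi(Y(t))|\,dt\ \ge\ \Bigl|\varphi\Bigl(\int_0^{t_0}Y(t)\,dt\Bigr)\Bigr|.
\]
The remaining task is to deduce $\varphi(\int_0^{t_0}Y(t)\,dt)\ge t_0\|x_0\|$ from the endpoint constraint, and this is where I expect the main difficulty. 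In the commutative subcase the endpoint equation forces $\int_0^{t_0}Y(t)\,dt\equiv t_0x_0\pmod{\b_{sa}}$, which $\varphi$ sends to $t_0\|x_0\|$. In general, non-commutativity of the $Y(t)$'s distorts this identification by commutator corrections, and the radius hypothesis $t_0\|x_0\|\le\pi/2$ enters precisely to ensure that the exponential map stays injective on the relevant neighborhood and that a Baker--Campbell--Hausdorff or time-ordered-exponential expansion converges with corrections annihilated by $\varphi$. Granting this last estimate, combining with the first step yields $\mathrm{length}(\gamma)\ge t_0\|x_0\|=\mathrm{length}(\delta)$, completing the proof.
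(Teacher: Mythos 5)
The statement you are proving is quoted in the paper from Dur\'an--Mata-Lorenzo--Recht; the paper itself supplies no proof, so the only question is whether your argument stands on its own. It does not: there is a genuine gap at exactly the point you flag. Everything up to the display $\mathrm{length}(\gamma)\ge\bigl|\varphi\bigl(\int_0^{t_0}Y(t)\,dt\bigr)\bigr|$ is fine (constant speed of $\delta$ by invariance, lifting $\gamma$ through the submersion, the Hahn--Banach functional vanishing on $\b_{sa}$ and norming $x_0$), but the concluding estimate $\varphi\bigl(\int_0^{t_0}Y(t)\,dt\bigr)\ge t_0\|x_0\|$ is asserted, not proved, and the mechanism you propose for it would fail. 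The endpoint condition only says that the time-ordered exponential of $iY(\cdot)$ lands in the coset of $e^{it_0x_0}$; extracting from this a statement about $\int Y\,dt$ modulo $\b_{sa}$ runs into two obstructions: (i) the Baker--Campbell--Hausdorff corrections are built from commutators $[Y(s),Y(t)]$, which have no reason whatsoever to lie in $\b_{sa}$ (that is the only subspace $\varphi$ kills), and (ii) even in the commutative case $\exp$ is not injective, and the hypothesis $t_0\|x_0\|\le\pi/2$ controls only the curve $\delta$, not the size of $\int_0^{t_0}Y(t)\,dt$ for a competitor $\gamma$, so the ``identification modulo $\b_{sa}$'' cannot be forced. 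In short, the linear functional linearizes the length but not the endpoint constraint, and that mismatch is the whole difficulty of the theorem.

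For comparison, the actual argument of \cite{duranmatarecht} avoids this entirely: from the best-approximation property of $x_0$ one produces not merely a Hahn--Banach functional but a \emph{state} $\varphi$ of $\a$ adapted to $x_0$ and $\b$, and uses its GNS representation to map $M=\u_\a/\u_\b$ into the unit sphere of a Hilbert space in a way that does not increase lengths and sends $\delta$ to a great circle parametrized with speed $\|x_0\|$. Minimality of $\delta$ up to $|t|\le\pi/(2\|x_0\|)$ is then inherited from the classical minimality of great circles up to arc length $\pi/2$ (after rescaling), which is also where the radius in the statement comes from. If you want to salvage your approach, the state-plus-sphere comparison is the step you would need to substitute for the BCH heuristic.
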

Here, by {\it minimal length along its path} at the given interval of $t$, means that if $[t_0,t_1]$ is a subinterval of $[-\frac{\pi}{2|\vv|_{[u]}},\frac{\pi}{2|\vv|_{[u]}}]$, and $\gamma(t)$  ($t\in I$) is an arbitrary smooth curve in $M$  joining $\delta(t_0)$ and $\delta(t_1)$, then
$$
\ell(\delta|_{[t_0,t_1]})=\int_{t_0}^{t_1} |\dot{\delta}(t)|_{\delta(t)} dt\le   \int_I |\dot{\gamma}(t)|_{\gamma(t)} dt =\ell(\gamma).
$$
\begin{ejem}\label{34}
An example where minimal liftings exist at every tangent vector (at every point), occurs when both $\a$ and $\b$ are von Neumann algebras. For instance, if $V\in\ii(\h)$, then 
$$
M=\o_{\epsilon_V}\simeq \u(\h\times\h) \ / \ \u(\h\times\h)\cap \{\epsilon_V\}'
$$
is such an example. Indeed, $\u(\h\times\h)\cap \{\epsilon_V\}'$ is the unitary group of the von Neumann algebra $\{\epsilon_V\}'\subset \b(\h\times\h)$. 

\end{ejem}
\section{Finsler metric and minimal curves in $\ii(\h)$}
In this section we show that minimal liftings of the homogeneous space 
$$
\o_{\epsilon_V}\simeq \u(\h) \ / \u(\h)\cap\{\epsilon_V\}'
$$
induce in a simple manner minimal liftings in $\ii(\h)$, or  more precisely, in the connected component $\ii(\h)_V \simeq \u(\h)\times\u(\h) \ / \ {\rm I}_V$ of $V$ in $\ii(\h)$. 

Note that ${\rm I}_V$ is not the unitary group of a selfadjoint algebra in a straightforward fashion. We shall use the $2\times 2$ model for $\ii(\h)$, in order to be able to obtain minimal liftings, and as a byproduct, a stronger minimality result. Namely, that the metric geodesics obtained are not only (locally) minimal in $\ii(\h)$, but also in the ambient manifold $\o_{\epsilon_V}$ (regarding $\ii(\h)_V$ as a subset of $\o_{\epsilon_V}$ via the isometric embedding $\ii(\h)_V\hookrightarrow \o_{\epsilon_V}$, $V\mapsto\epsilon_V$).

Following the program in \cite{duranmatarecht}, we define the following Finsler metrics in $\o_{\epsilon_V}$ and in $\ii(\h)$. 
If $\epsilon^3=\epsilon^*=\epsilon$, and $\v\in(T\o_\epsilon)_\epsilon$, we put
\begin{equation}\label{metrica epsilonV}
|\v|_\epsilon=\inf\{\|\XX\|: ({\rm T}\pi_\epsilon)_1(\XX)=\v\}=\inf\{\|\XX+\ZZ\|: \ZZ\in\iota_\epsilon\}
\end{equation}
If $V\in\ii(\h)$ and $\v\in\TI_V$, we put
\begin{equation}\label{metrica V}
|\v|_V=\inf\{\|(A,B)\|: A,B\in\b(\h), A^*=A, B^*=B \hbox{ and } iAV-iVB=\v\},
\end{equation}
where as is usual $\|(A,B)\|=\max\{\|A\|,\|B\|\}$ (i.e., the C$^*$-norm in $\b(\h)\times\b(\h)$).
\begin{rem}
Clearly, definitions (\ref{metrica epsilonV}) and (\ref{metrica V}) make
$$
\ii(\h)\to \Delta\cdot\epsilon_V \ , \ V\mapsto \epsilon_V
$$
an isometric diffeomorphism.
\end{rem}

Before we proceed, we state the following results, which are elementary and known, and will be used thoroughly. The first fact, is that if one deals with $n\times n$ (block) operator matrices, the diagonal map
\begin{equation}\label{pinching}
E\Big( \left( \begin{array}{cccc} A_{11} & A_{12} & \dots & A_{1n} \\  A_{21} & A_{22} & \dots & A_{2n} \\ \dots & \dots & \dots & \dots \\ A_{n1} & A_{n2} & \dots & A_{nn}\end{array}\right) \Big)=\left( \begin{array}{cccc} A_{11} & 0 & \dots & 0 \\  0 & A_{22} & \dots & 0 \\ \dots & \dots & \dots & \dots \\ 0 & 0 & \dots & A_{nn}\end{array}\right)
\end{equation}
is positive, linear and contractive. The second fact is the following:
\begin{lem}\label{area 51}
Let $A, P$  in $\b(\h)$, $P$ an orthogonal projection. Regard $A$ as a $2\times 2$ matrix in terms of $P$:
$$
A=\left( \begin{array}{cc} A_{11} & A_{12} \\ A_{21} & A_{22} \end{array}\right) \begin{array}{l} R(P) \\ N(P) \end{array}.
$$
Then
$$
\Big\| \left( \begin{array}{cc} 0 & A_{12} \\ A_{21} & 0 \end{array}\right)\Big\| \le \Big\|\left( \begin{array}{cc} A_{11} & A_{12} \\ A_{21} & A_{22} \end{array}\right)\Big\|.
$$
\end{lem}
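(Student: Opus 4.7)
The plan is to exploit the symmetry that distinguishes the off-diagonal part of $A$ (with respect to $P$) from the diagonal part. Let $U = 2P - I$; then $U$ is selfadjoint and unitary (a symmetry), and in the $2\times 2$ block form with respect to $P$ it is
$$
U = \left(\begin{array}{cc} I & 0 \\ 0 & -I \end{array}\right).
$$
A direct block computation gives
$$
UAU^* = \left(\begin{array}{cc} A_{11} & -A_{12} \\ -A_{21} & A_{22} \end{array}\right),
$$
so that
$$
\frac{1}{2}(A - UAU^*) = \left(\begin{array}{cc} 0 & A_{12} \\ A_{21} & 0 \end{array}\right).
$$

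Once that identity is in hand, the inequality is immediate: since $U$ is unitary, $\|UAU^*\| = \|A\|$, and hence by the triangle inequality
$$
\left\|\left(\begin{array}{cc} 0 & A_{12} \\ A_{21} & 0 \end{array}\right)\right\| = \frac{1}{2}\|A - UAU^*\| \le \frac{1}{2}(\|A\| + \|UAU^*\|) = \|A\|.
$$

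There is no real obstacle here; the only thing to be careful about is making the block decomposition formally precise (we are implicitly identifying $\h$ with $R(P)\oplus N(P)$ via the canonical orthogonal decomposition, and the operator $U = 2P - I$ is the symmetry that flips the sign of the second summand). I would mention in passing that the companion inequality $\|E(A)\| \le \|A\|$ for the diagonal pinching (\ref{pinching}) is obtained analogously from $E(A) = \frac{1}{2}(A + UAU^*)$, which ties the lemma directly to the contractivity of the pinching map that was stated just before.
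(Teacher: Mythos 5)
Your proof is correct, but it takes a genuinely different route from the paper's. You use the symmetry $U=2P-I$ and the averaging identity
$$
\left(\begin{array}{cc} 0 & A_{12} \\ A_{21} & 0 \end{array}\right)=\tfrac12\left(A-UAU^*\right),
$$
after which the bound follows from unitary invariance of the norm and the triangle inequality. The paper instead squares: it writes $\|A\|^2=\|A^*A\|$, applies the contractivity of the diagonal pinching $E$ of (\ref{pinching}) to get $\|A\|^2\ge\|E(A^*A)\|$, and then drops the positive diagonal summands $A_{11}^*A_{11}$ and $A_{22}^*A_{22}$ to be left with ${\rm diag}(A_{21}^*A_{21},\,A_{12}^*A_{12})$, whose norm is exactly the square of the norm of the off-diagonal block. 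Your argument is shorter and more self-contained: it does not invoke the contractivity of $E$ as a black box (indeed, as you note, the same averaging $E(A)=\frac12(A+UAU^*)$ proves that contractivity, so your method subsumes the preliminary fact the paper relies on), and it works verbatim in any unital C$^*$-algebra containing $P$. What the paper's computation buys in exchange is the explicit positive operator inequality $E(A^*A)\ge {\rm diag}(A_{21}^*A_{21},\,A_{12}^*A_{12})$, which records slightly more information than the norm inequality alone; but for the purposes of Lemma \ref{area 51} as stated, the two arguments are equally conclusive and yours is the more economical.
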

\begin{proof}
Note that
$$
\|A\|^2=\|A^*A\|=\Big\|\left( \begin{array}{ccc} A_{11}^*A_{11}+A_{21}^*A_{21} &  A_{11}^*A_{12}+A_{21}^*A_{22}  \\ A_{12}^*A_{11}+A_{22}^*A_{21} &  A_{12}^*A_{12}+A_{22}^*A_{22} \end{array}\right)\Big\|\ge \|E(A^*A)\|,
$$
where $E$ is the linear map given in (\ref{pinching}),
$$
\|E(A^*A)\|=\Big\|\left( \begin{array}{cc} A_{11}^*A_{11}+A_{21}^*A_{21} & 0 \\ 0 & A_{12}^*A_{12}+A_{22}^*A_{22} \end{array}\right)\Big\|
$$
Since clearly 
$$
\left( \begin{array}{cc} A_{11}^*A_{11}+A_{21}^*A_{21} & 0 \\ 0 & A_{12}^*A_{12}+A_{22}^*A_{22} \end{array}\right)\ge \left( \begin{array}{cc} A_{21}^*A_{21} & 0 \\ 0 & A_{12}^*A_{12} \end{array}\right)
$$
we get
$$
\|A\|^2\ge \Big\|\left( \begin{array}{cc} A_{21}^*A_{21} & 0 \\ 0 & A_{12}^*A_{12} \end{array}\right)\Big\|=\Big\| \left( \begin{array}{cc} 0 & A_{21}^* \\ A_{12}^* & 0 \end{array}\right)\left( \begin{array}{cc} 0 & A_{12} \\ A_{21} & 0 \end{array}\right)
\Big\|=\Big\|\left( \begin{array}{cc} 0 & A_{12} \\ A_{21} & 0 \end{array}\right)\Big\|^2
$$
\end{proof}
Our next goal is to compare $|\v|_V$ with the usual operator norm $\|\v\|$ of $\v\in\TI_V$, for $V\in\ii(\h)$. In fact, we shall see that for each fixed $V\in\ii(\h)$,  $|\ \ |_V$ and $\|\ \|$ are equivalent in $\left({\rm T}\ii(\h)\right)_V$. To do this task, we shall need a classical result by M.C. Krein \cite{krein}, known as the extension problem for symmetric transformations (see also the excellent text \cite{riesz nagy}, Section 125, or also \cite{davis et al}, \cite{parrott} for more nuanced developements on this subject).
We state this result in the following remark, adapted to our particular problem:
\begin{rem}
Let 
$$
\left(\begin{array}{cc} A_{11} & A_{12} \\ A_{12}^* & *\end{array}\right)
$$
be an incomplete operator matrix, with $A_{11}^*=A_{11}$. Then there exist (non unique) selfadjoint completions 
$$
\AA=\left(\begin{array}{cc} A_{11} & A_{12} \\ A_{12}^* & A_{22}\end{array}\right)
$$
with $\|\AA\|=\Big\|\left(\begin{array}{c} A_{11} \\ A_{12}^*\end{array}\right)\Big\|=\|\left(\begin{array}{lr} A_{11} & A_{12}  \end{array}\right)\|$.
\end{rem}

\begin{teo}\label{comparacion}
Let $V\in\ii(\h)$ and $\v\in\TI_V$. Then 
$$
|\v|_V\le \|\v\|\le 2 |\v|_V
$$
\end{teo}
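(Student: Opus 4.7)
The easy direction $\|\v\|\le 2|\v|_V$ follows from the triangle inequality: for any selfadjoint pair $(A,B)$ lifting $\v=iAV-iVB$,
$$\|\v\|\le \|A\|\|V\|+\|V\|\|B\|\le 2\max\{\|A\|,\|B\|\},$$
using $\|V\|\le 1$; taking the infimum over liftings gives the bound.

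The substantive inequality $|\v|_V\le\|\v\|$ will be proved by constructing an explicit selfadjoint pair $(A,B)$ with $iAV-iVB=\v$ and $\max\{\|A\|,\|B\|\}\le\|\v\|$. Set $P_i=V^*V$, $P_f=VV^*$, and let $V_0:P_i\h\to P_f\h$ be the unitary restriction of $V$. Writing $\v=iXV-iVY$ with $X,Y$ selfadjoint, one checks $P_f^\perp\v P_i^\perp=0$, so $\v$ has four corners $v_{11}=P_f\v P_i$, $v_{12}=P_f\v P_i^\perp$, $v_{21}=P_f^\perp\v P_i$, $v_{22}=0$. Writing $A$ in blocks with respect to $P_f\h\oplus P_f^\perp\h$ and $B$ in blocks with respect to $P_i\h\oplus P_i^\perp\h$, the equation $iAV-iVB=\v$ forces the off-diagonal corners
$$A_{12}=iV_0 v_{21}^*,\qquad B_{12}=iV_0^* v_{12},$$
and leaves the $(1,1)$-corner equation
$$V_0^* A_{11}V_0 - B_{11} = -iV_0^* v_{11}.$$

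The key algebraic fact I will need is that $V_0^* v_{11}$ is \emph{anti}-selfadjoint on $P_i\h$: computing $P_i(V^*\v+\v^*V)P_i$ from $\v=iXV-iVY$ one finds it equals zero, which is exactly the anti-selfadjointness statement. Consequently the right-hand side above is selfadjoint, and the choice $B_{11}=0$, $A_{11}=-iv_{11}V_0^*$ gives selfadjoint corners satisfying the equation (selfadjointness of $A_{11}$ amounts to $v_{11}V_0^*+V_0 v_{11}^*=0$, obtained from the anti-selfadjointness by conjugation with $V_0$). The only freedom left is in $A_{22}$ and $B_{22}$, for which the Krein extension theorem recalled before the statement furnishes selfadjoint completions with
$$\|A\|=\|(A_{11},A_{12})\|,\qquad \|B\|=\|(B_{11},B_{12})\|.$$

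The last step is to bound these row norms. Using $V_0^*V_0=P_i$, $V_0V_0^*=P_f$ and the consequence $V_0^* v_{11}V_0^*=-v_{11}^*$ of the anti-selfadjointness, one computes $A_{11}A_{11}^*=v_{11}v_{11}^*$ and $A_{12}A_{12}^*=V_0 v_{21}^* v_{21}V_0^*$, so
$$\|(A_{11},A_{12})\|^2=\|v_{11}v_{11}^*+V_0 v_{21}^* v_{21}V_0^*\|=\|v_{11}^*v_{11}+v_{21}^*v_{21}\|,$$
the last equality by conjugating with $V_0$. This is the norm of the $(1,1)$-corner of $\v^*\v$ in the $P_i$-decomposition, hence $\le\|\v^*\v\|=\|\v\|^2$; and a parallel but easier computation gives $\|(B_{11},B_{12})\|^2=\|V_0^* v_{12}v_{12}^* V_0\|=\|v_{12}\|^2\le\|\v\|^2$. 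The step I expect to be the main obstacle is pinning down the anti-selfadjointness identity for $V_0^* v_{11}$: it is what makes the diagonal equation solvable by selfadjoint $A_{11},B_{11}$, and it is what turns $A_{11}A_{11}^*$ into the positive piece $v_{11}v_{11}^*$ that matches a block of $\v^*\v$. Once that identity is in hand, Krein's theorem absorbs the $(2,2)$ corners and the norm estimate reduces to a pinching of $\v^*\v$.
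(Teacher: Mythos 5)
Your proof is correct. It rests on the same two pillars as the paper's argument --- Krein's selfadjoint completion theorem for the free corner of $A$, and the identification of the resulting row norms with $\|\v P\|$ and $\|\v P^\perp\|$ for the appropriate projection $P$ --- but you organize the reduction quite differently. The paper first treats balanced partial isometries by conjugating with the group action to the case $V=P_0$ an orthogonal projection (where a single block decomposition suffices and the diagonal corner of $-i\v$ is automatically selfadjoint), and then handles general $V$ by dilating to $V\oplus 0$ in $\h\times\h$ and verifying $|\v\oplus 0|_{V\oplus 0}=|\v|_V$. You instead work at an arbitrary $V$ with the two block decompositions induced by $\alpha(V)$ and $\omega(V)$, intertwined by the unitary part $V_0$; the price is the extra identity $P_i(V^*\v+\v^*V)P_i=0$ (which collapses to the paper's trivial selfadjointness observation when $V=P_0$), and the payoff is that the dilation step and the balanced/non-balanced case split disappear entirely. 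I checked the corner equations $A_{12}=iV_0v_{21}^*$, $B_{12}=iV_0^*v_{12}$, $V_0^*A_{11}V_0-B_{11}=-iV_0^*v_{11}$, the anti-selfadjointness of $V_0^*v_{11}$ (hence the selfadjointness of $A_{11}=-iv_{11}V_0^*$), and the norm computation $\|A_{11}A_{11}^*+A_{12}A_{12}^*\|=\|P_i\v^*\v P_i\|\le\|\v\|^2$ together with $\|B_{12}\|=\|P_f\v P_i^\perp\|=\|\v P_i^\perp\|\le\|\v\|$; all are sound, and your bounds $\|A\|=\|\v P_i\|$, $\|B\|=\|\v P_i^\perp\|$ are exactly the quantities $\|\v P_0\|$, $\|\v P_0^\perp\|$ that appear in the paper's balanced case. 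The easy inequality $\|\v\|\le 2|\v|_V$ is handled identically in both.
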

\begin{proof}
Let us first consider the case of a balanced partial isometry, $n(V)=r^\perp(V)$. Let $U,W$ be unitaries such that $UP_0W^*=V$, for an orthogonal projection $P_0$. Clearly, pulling  back $\v$ with the left action of the pair $(U,W)$, it suffices to reason in the case $V=P_0$. To this effect, note that the action of $(U,W)$ is isometric both for the Finsler norm $|\ \ |_V$ and the operator norm $\|\ \|$. Let $(A,B)$ be a lifting for $\v$, i.e., $A^*=A$, $B^*=B$ and $iAP_0-iP_0B=\v$. Note that, in matrix form in terms of $P_0$,
$$
\v=i\left(\begin{array}{cc} A_{11} & A_{12} \\ A_{12}^* & A_{22}\end{array}\right)
\left(\begin{array}{cc} 1 & 0 \\ 0  & 0\end{array}\right)-i \left(\begin{array}{cc} 1 & 0 \\ 0 & 0\end{array}\right)\left(\begin{array}{cc} B_{11} & B_{12} \\ B_{12}^* & B_{22}\end{array}\right)=i\left(\begin{array}{cc} A_{11}-B_{11} & B_{12} \\ A_{12}^* & 0 \end{array}\right)
$$ 
Let us alter the lifting $(A,B)$. Put
$$
B_0=\left(\begin{array}{cc} 0 & B_{12} \\ B_{12}^* & 0\end{array}\right)
$$
and 
$$
A_0=\left(\begin{array}{cc} A_{11}-B_{11} & A_{12} \\ A_{12}^* & Z\end{array}\right)
$$
where $Z=Z^*$ is such that $A_0$ is a solution of Krein's extension problem for the symmetric incomplete matrix
$$
\left(\begin{array}{cc} A_{11}-B_{11} & A_{12} \\ A_{12}^* & *  \end{array}\right)
$$
Straightforward computations show that $(A_0,B_0)$ is also a lifting of $\v$: $A_0^*=A_0$, $B_0^*=B_0$ and  $iA_0P_0-iP_0B_0=\v$.
Then 
$$
\|A_0\|=\Big\|\left(\begin{array}{c} A_{11}-B_{11}  \\ A_{12}^*\end{array}\right)\Big\|=\Big\| \left(\begin{array}{cc} A_{11}- B_{11} & 0 \\ A_{12}^* & 0\end{array}\right)\Big\|=\Big\|\left(\begin{array}{cc} A_{11} - B_{11} & B_{12} \\ A_{12}^* & 0\end{array}\right)\left(\begin{array}{cc} 1 & 0 \\ 0 & 0\end{array}\right)\Big\|
$$
$$
=\|\v P_0\|\le\|\v\|.
$$
On the other hand
$$
\Big\|\left(\begin{array}{cc} 0 & B_{12} \\ B_{12}^* & 0\end{array}\right)\Big\|=\Big\|\left(\begin{array}{cc} 0 & B_{12} \\ 0 & 0\end{array}\right)\Big\|=\Big\|\left(\begin{array}{cc} A_{11}-B_{11} & B_{12} \\ A_{12}^* & 0\end{array}\right)\left(\begin{array}{cc} 0 & 0 \\ 0 & 1\end{array}\right)\Big\|=\|\v P_0^\perp\|\le \|\v\|.
$$
Thus, we have found a lifting $(A_0,B_0)$ of $\v$ such that 
$$
\|(A_0,B_0)\|=\max\{\|A_0\|,\|B_0\|\}\le \|\v\|.
$$
It follows that $|\v|_{P_0}\le\|\v\|$.

Let us now consider the general case. Consider the Hilbert space $\h\times\h$.
Note that $V\oplus 0$ in $\h\times\h$ defined as $V(\xi,\eta)=(V\xi,0)$ is a partial isometry with $n(V\oplus 0)=r^\perp(V\oplus 0)=+\infty$. Similarly, if $\v\in\TI_V$, then $\v\oplus 0\in\left({\rm T} \ii(\h\times\h)\right)_{V\oplus 0}$. Note that
$$
|\v\oplus 0|_{V\oplus 0}\le |\v|_V.
$$
Indeed, any lifing $(A,B)$ of $\v$, provides a lifting $(A\oplus 0,B\oplus 0)$ of $\v\oplus 0$. On the other hand, if
$$
\left(\begin{array}{cc} A_{11} & A_{12} \\ A_{12}^* & A_{22}\end{array} \right) \ , \ 
\left(\begin{array}{cc} B_{11} & B_{12} \\ B_{12}^* & B_{22}\end{array} \right)
$$ 
is a lifting of $\v\oplus 0$. i.e.,
$$
\v\oplus 0=\left(\begin{array}{cc} \v & 0 \\ 0 & 0 \end{array} \right)=i\left(\begin{array}{cc} A_{11}V-VB_{11} & -VB_{12} \\ A_{12}^*V & 0\end{array} \right),
$$
then, in particular $(A_{11},B_{11})$ is a lifting for $\v$.
Since 
$$
\|A_{11}\|\le\Big\| \left(\begin{array}{cc} A_{11} & A_{12} \\ A_{12}^* & A_{22}\end{array} \right)\Big\| \ \hbox{ and } \ \|B_{11}\|\le \Big\|\left(\begin{array}{cc} B_{11} & B_{12} \\ B_{12}^* & B_{22}\end{array} \right)\Big\|,
$$
it follows that $|\v|_V\le |\v\oplus 0|_{V\oplus 0}$.
Then, by the first case,
$$
\|\v\|=\|\v\oplus 0\|\le|\v\oplus 0|_{V\oplus 0}=|\v|_V.
$$

The other inequality is trivial:
$$
\|\v\|=\|AV-VB\|\le \|AV\|+\|VB\|\le \|A\|+\|B\|\le 2 \max\{\|A\|,\|B\|\},
$$
for any lifting $(A,B)$ of $\v$.

\end{proof}
\begin{rem}
The inequality $|\v|_V\le\|\v\|$ may be strict. The problem of finding completions of $2\times 2$ matrices with minimal norm has been studied for non-selfadjoint matrix operators (see for instance \cite{davis et al}). Namely, applied in our context and following the notations of the above theorem, given the incomplete (non-selfadjoint) matrix operator
$$
\left(\begin{array}{cc} A_{11}-B_{11} & B_{12} \\ A_{12}^* & * \end{array}\right)
$$
There exists a completion $C=\left(\begin{array}{cc} A_{11}-B_{11} & B_{12} \\ A_{12}^* & Y \end{array}\right)$ with minimal norm, that is
$$
\|C\|=\max\{\|\left( \begin{array}{lr} A_{11}-B_{11} & B_{12} \end{array}\right)\|, \Big\|\left(\begin{array}{c} A_{11}-B_{11} \\ A_{12}^*\end{array}\right)\Big\|\}.
$$
The row $\left( \begin{array}{lr} A_{11}-B_{11} & B_{12} \end{array}\right)$ is the first row of the incomplete matrix $\left( \begin{array}{cc} A_{11}-B_{11} & B_{12} \\ B_{12}^* & *  \end{array}\right)$
which can be completed with minimal norm to the selfadjoint operator
$
\left( \begin{array}{cc} A_{11}-B_{11} & B_{12} \\ B_{12}^* & Z'  \end{array}\right),
$
with 
$$
\|\left( \begin{array}{lr} A_{11}-B_{11} & B_{12} \end{array}\right)\|=\Big\|\left( \begin{array}{cc} A_{11}-B_{11} & B_{12} \\ B_{12}^* & Z'  \end{array}\right)\Big\|.
$$
By Lemma \ref{area 51}, 
$$
\Big\|\left( \begin{array}{cc} A_{11}-B_{11} & B_{12} \\ B_{12}^* & Z'  \end{array}\right)\Big\|\ge \Big\|\left( \begin{array}{cc} 0 & B_{12} \\ B_{12}^* & 0  \end{array}\right)\Big\|.
$$
Similarly, reasoning with the first column $\left(\begin{array}{c} A_{11}-B_{11} \\ A_{12}^*\end{array}\right)$, we get that there is a selfadjoint completion $\left(\begin{array}{cc} A_{11}-B_{11} & A_{12}\\ A_{12}^* & Z\end{array}\right)$ such that
$$
\Big\|\left(\begin{array}{c} A_{11}-B_{11} \\ A_{12}^*\end{array}\right)\Big\|=\Big\|\left(\begin{array}{cc} A_{11}-B_{11} & A_{12} \\ A_{12}^* & Z\end{array}\right)\Big\|.
$$
It follows that 
$$
\|C\|\ge \max\{\Big\|\left( \begin{array}{cc} 0 & B_{12} \\ B_{12}^* & 0  \end{array}\right)\Big\|, \Big\|\left(\begin{array}{cc} A_{11}-B_{11} & A_{12} \\ A_{12}^* & Z\end{array}\right)\Big\|\},
$$
which is the norm of a lifting $(A_0,B_0)$ of $\v$ (as in the first part of the proof of the above Theorem). That is, $\|C\|\ge |\v|_V$. Now, $C$ and $-i\v$ are both completions of the same incomplete (non-selfadjoint) matrix. Since $C$ has minimal norm among these completions, one has $\|C\|\le\|\v\|$. Moreover, it is known that, in general, putting $0$ in the $2,2$ place is not the optimal solution (see \cite{davis et al}, \cite{parrott}): there are examples where $\|C\|<\|\v\|$. Then, for such $\v$, we have $|\v|_V<\|\v\|$.
\end{rem}

In order to establish the existence of minimal liftings in $\ii(\h)$, we need the next lemma.
\begin{lem}
Let $V\in\ii(\h)$ and  $\AA=\left( \begin{array}{cc} A_1 & 0 \\ 0 & A_2 \end{array}\right)$ be selfadjoint in $\h\times\h$. Let
$$
\XX=\left( \begin{array}{cc} X_{11} & X_{12} \\ X_{12}^* & X_{22} \end{array}\right)\in  \{ \epsilon_V\}' , \ \XX^*=\XX^*,
$$
such that 
$$
\| \AA+\XX\|\le \|\AA+\YY\| \hbox{ for all } \YY^*=\YY\in\{\epsilon_V\}'
$$
(which exists, recall Example \ref{34}). 
Then $\XX_0:=\left( \begin{array}{cc} X_{11} & 0 \\ 0 & X_{22} \end{array}\right)$ satisfies
\begin{itemize}
\item
$\XX_0$ commutes with $\epsilon_V$, in particular, $i \XX_0\in \iota_V$;
\item
$\|\AA+\XX_0\|\le \|\AA+\ZZ\|$ for all $i\ZZ\in\iota_V$.
\end{itemize}
\end{lem}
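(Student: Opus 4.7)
The plan is to handle the two bullet points separately, with the pinching map (\ref{pinching}) doing the heavy lifting in the second one.

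For the first bullet, I would begin by explicitly computing the commutation relations forced by $\XX \in \{\epsilon_V\}'$. Writing out $\XX\epsilon_V$ and $\epsilon_V\XX$ as $2\times 2$ block products, the diagonal blocks of the identity $\XX\epsilon_V = \epsilon_V\XX$ give
$$
X_{12}V^* = VX_{12}^*, \qquad X_{12}^*V = V^*X_{12},
$$
while the off-diagonal blocks give
$$
X_{11}V = VX_{22}, \qquad X_{22}V^* = V^*X_{11}.
$$
The last pair is exactly what is needed for $\XX_0 = \mathrm{diag}(X_{11},X_{22})$ to commute with $\epsilon_V$, as a direct check shows. Since $X_{11}, X_{22}$ are selfadjoint (being the diagonal blocks of a selfadjoint operator) and satisfy $X_{11}V = VX_{22}$, the pair $(iX_{11},iX_{22})$ fits the definition (\ref{algebra i}) of $\iota_V$, so $i\XX_0 \in \iota_V$.

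For the second bullet, the key observation is that the pinching map $E$ of (\ref{pinching}), specialized to $n=2$, satisfies $E(\AA) = \AA$ (since $\AA$ is already block diagonal by hypothesis) and $E(\XX) = \XX_0$. By linearity,
$$
E(\AA + \XX) = \AA + \XX_0,
$$
and since $E$ is contractive,
$$
\|\AA + \XX_0\| = \|E(\AA+\XX)\| \le \|\AA + \XX\|.
$$
Now take any $i\ZZ \in \iota_V$: by definition $\ZZ$ is block diagonal with selfadjoint diagonal entries $Z_1, Z_2$ satisfying $Z_1V = VZ_2$, so the block computation above shows $\ZZ\in\{\epsilon_V\}'$. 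The minimality assumption on $\XX$ (over the full commutant) therefore applies to $\YY = \ZZ$, giving $\|\AA+\XX\| \le \|\AA+\ZZ\|$. Chaining both inequalities finishes the proof.

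The only real content here is the contractivity of $E$ combined with the compatibility of $E$ with the block structure of $\AA$; the commutation-relation bookkeeping is routine. I do not expect any genuine obstacle: the lemma is essentially the statement that, since $\AA$ is already diagonal, one may replace any minimizer in the commutant by its diagonal block part without increasing the norm, and this diagonal block part automatically lies in the smaller isotropy algebra $\iota_V$.
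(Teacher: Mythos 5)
Your proposal is correct and follows essentially the same route as the paper: identify the block commutation relations forced by $\XX\in\{\epsilon_V\}'$ to see that $\XX_0$ commutes with $\epsilon_V$ and $i\XX_0\in\iota_V$, then combine contractivity of the pinching map $E$ (giving $\|\AA+\XX_0\|=\|E(\AA+\XX)\|\le\|\AA+\XX\|$) with the minimality of $\XX$ over the full commutant applied to the block-diagonal $\ZZ$. The only cosmetic difference is that the paper phrases the last step via the diagonal part $\ZZ_0$ of a general commutant element rather than observing directly that every $i\ZZ\in\iota_V$ already lies in $\{\epsilon_V\}'$; the content is identical.
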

\begin{proof}
Clearly, $\XX_0^*=\XX_0$. The fact that $\XX$ commutes with $\epsilon_V$, means that
$$
\left\{\begin{array}{l} X_{12}V^*=VX_{12}^*, \\  X_{12}^*V=V^*X_{12}, \\  X_{11}V=VX_{22}\end{array}\right.
$$
and thus also $\XX_0$ commutes with $\epsilon_V$, in particular $X_{11}V=VX_{22}$,  which means that  $i\XX_0\in\iota_V$. By the same argument, it is  also  clear that if 
$\ZZ=\left( \begin{array}{cc} Z_{11} & Z_{12} \\ Z_{12}^* & Z_{22} \end{array}\right)$
commutes with $\epsilon_V$, then also $\ZZ_0=\left( \begin{array}{cc} Z_{11} & 0 \\ 0 & Z_{22} \end{array}\right)$ also commutes with $\epsilon_V$. Then
$$
\|\AA+\XX\|\le \|\AA+\ZZ_0\|.
$$
On the other hand, the linear map $E:\b(\h\times\h)\to \b(\h\times\h)$ given by
$$
E\big(\left( \begin{array}{cc} T_{11} & T_{12} \\ T_{21} & T_{22} \end{array}\right)\big)=\left( \begin{array}{cc} T_{11} & 0 \\ 0 & T_{22} \end{array}\right)
$$ 
is contractive. Then
\begin{equation}\label{E}
\|\AA+\XX_0\|=\|E(\AA+\XX)\|\le\|\AA+\XX\|\le\|\AA+\ZZ_0\|,
\end{equation}
which completes the proof.
\end{proof}
We shall call $\AA_0:=\AA+\XX_0$ a {\it diagonal minimal lifting}. Note that the above Lemma states that any vector tangent to $\Delta\cdot\epsilon_V$ has a diagonal minimal lifting $\AA_0$. 
Applying Theorem \ref{teo dmr} \cite{duranmatarecht} we get
\begin{teo}
Let $V\in\ii(\h)$ and $\v=iXV-iVY\in\TI_V$. Let $\VV=\left( \begin{array}{cc} 0 & \v \\ \v^* & ^0 \end{array}\right)\in(T\Delta\cdot\epsilon_V)_V$, and pick $\AA_0 $ a diagonal minimal lifting for $\VV$. Then the curve
$$
\delta(t)=e^{it\AA_0}\ \epsilon_V \ e^{-it\AA_0}
$$
which satisfies that $\delta(0)=\epsilon_V$ and $\dot{\delta}(0)=\VV$, has minimal length along its path in $\Delta \cdot \epsilon_V$, for $|t|\le \frac{\pi}{2|\VV|_V}$. Moreover, it also has minimal length among curves in bigger manifold $\o_{\epsilon_V}$, in the same time interval.
\end{teo}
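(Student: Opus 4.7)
My plan is to apply Theorem \ref{teo dmr} twice to the same element $\AA_0$: once in the smaller homogeneous space $\Delta\cdot\epsilon_V$ and once in the larger homogeneous space $\o_{\epsilon_V}$. Both C$^*$-algebra inclusions fit the framework of \cite{duranmatarecht}: $\{\epsilon_V\}'$ and its diagonal corner $(\b(\h)\oplus\b(\h))\cap\{\epsilon_V\}'$ are von Neumann algebras, so the hypotheses of Example \ref{34} (and hence of Theorem \ref{teo dmr}) hold in each case.

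Before applying the theorem I would verify that $\delta(t)$ lies in $\Delta\cdot\epsilon_V$: since $\AA_0$ is block-diagonal, $e^{it\AA_0}\in\Delta$, so $\delta(t)=e^{it\AA_0}\epsilon_V e^{-it\AA_0}\in\Delta\cdot\epsilon_V$ for all $t$; the commutator computation $\dot{\delta}(0)=i[\AA_0,\epsilon_V]=\VV$ is immediate. The preceding Lemma is precisely the statement that $\AA_0$ minimizes $\|\AA+\ZZ\|$ over all $i\ZZ\in\iota_V$, i.e.\ that $\AA_0$ is a minimal lifting of $\VV$ for the quotient $\Delta\cdot\epsilon_V\simeq (\u(\h)\times\u(\h))/(\Delta\cap\{\epsilon_V\}')$. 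Theorem \ref{teo dmr} then yields minimality of $\delta$ in $\Delta\cdot\epsilon_V$ on the stated interval $|t|\le\pi/(2|\VV|_V)$.

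For the minimality in the bigger orbit $\o_{\epsilon_V}\simeq\u(\h\times\h)/(\u(\h\times\h)\cap\{\epsilon_V\}')$, what must be checked is that $\AA_0$ is minimal not only among diagonal liftings but among all selfadjoint $\BB\in\b(\h\times\h)$ with $i[\BB,\epsilon_V]=\VV$. Here I would reuse the element $\XX\in\{\epsilon_V\}'$ from the hypothesis of the Lemma, which attains the larger infimum by Example \ref{34}. The inequality (\ref{E}) gives $\|\AA_0\|=\|\AA+\XX_0\|=\|E(\AA+\XX)\|\le\|\AA+\XX\|$, while the reverse bound $\|\AA+\XX\|\le\|\AA_0\|$ holds because $\XX_0\in\{\epsilon_V\}'$, so $\AA_0$ is itself a valid lifting in the larger sense. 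Hence $\|\AA_0\|$ coincides with the larger infimum, $\AA_0$ is a minimal lifting in $\o_{\epsilon_V}$ as well, and Theorem \ref{teo dmr} yields the second minimality claim on exactly the same interval, since the two Finsler norms of $\VV$ collapse into the common value $\|\AA_0\|$. No substantial obstacle is anticipated; the nontrivial analytic work of producing a block-diagonal selfadjoint operator that is minimal against both $\iota_V$ and $\iota_{\epsilon_V}$ has already been carried out in the preceding Lemma, and the rest is bookkeeping.
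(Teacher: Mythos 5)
Your proposal is correct and follows essentially the same route as the paper: the paper's own proof is the single observation that the chain of inequalities in the preceding Lemma already shows $\AA_0=\AA+\XX_0$ attains the infimum over all of $\{\epsilon_V\}'$ (since $\|\AA+\XX_0\|\le\|\AA+\XX\|\le\|\AA+\XX_0\|$, the last step because $\XX_0\in\{\epsilon_V\}'$ and $\XX$ is minimal there), so Theorem \ref{teo dmr} applies in both quotients with the same bound $\|\AA_0\|$. You have simply made explicit the bookkeeping that the paper leaves implicit.
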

\begin{proof}
The proof of the above Lemma, in fact shows that $\AA+\XX_0$ is a minimal lifting in the bigger quotient space.
\end{proof}
Therefore, if we consider the quotient left invariant metric in $\ii(\h)$, we obtain:
\begin{coro}\label{corolario 47}
Let $V\in\ii(\h)$ and $\v=iXV-iVY\in\TI_V$. Then there exist $X_0^*=X_0$, $Y_0^*=Y_0$ with $\v=iX_0V-iVY_0$, such that the curve
$$
\delta(t)=e^{itX_0}Ve^{-itY_0}
$$
which satisfies $\delta(0)=V$ and $\dot{\delta}(0)=\v$, has minimal length along its path in $\ii(\h)$, for $|t|\le\frac{\pi}{2|\v|_V}$.
\end{coro}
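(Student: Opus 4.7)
The plan is to translate the preceding theorem back from the $2\times 2$ model to $\ii(\h)$ by exploiting the isometric diffeomorphism $\ii(\h)\to\Delta\cdot\epsilon_V$, $V\mapsto\epsilon_V$. Given $\v=iXV-iVY\in\TI_V$, the first step is to form the corresponding tangent vector in the ambient manifold,
$$
\VV=\left(\begin{array}{cc} 0 & \v \\ \v^* & 0 \end{array}\right)\in (T\Delta\cdot\epsilon_V)_{\epsilon_V},
$$
and invoke the preceding theorem to produce a diagonal minimal lifting
$$
\AA_0=\left(\begin{array}{cc} X_0 & 0 \\ 0 & Y_0 \end{array}\right), \qquad X_0^*=X_0,\ Y_0^*=Y_0.
$$

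Next, because $\AA_0$ is diagonal, the exponential factors as $e^{it\AA_0}=\mathrm{diag}(e^{itX_0},e^{itY_0})$, and a direct block computation yields
$$
e^{it\AA_0}\,\epsilon_V\,e^{-it\AA_0}=\left(\begin{array}{cc} 0 & e^{itX_0}V e^{-itY_0} \\ e^{itY_0}V^*e^{-itX_0} & 0 \end{array}\right)=\epsilon_{\delta(t)},
$$
where $\delta(t):=e^{itX_0}V e^{-itY_0}$. Thus the minimal curve of the preceding theorem in $\Delta\cdot\epsilon_V$ is exactly the image under $V\mapsto\epsilon_V$ of $\delta(t)$. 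The initial conditions $\delta(0)=V$ are immediate, and differentiating at $t=0$ gives $\dot\delta(0)=iX_0V-iVY_0$; this equals $\v$ precisely because $\AA_0$ is a lifting of $\VV$, so that $({\rm T}\pi_{\epsilon_V})_{1}(i\AA_0)=\VV$ forces the $(1,2)$-entry $iX_0V-iVY_0$ to coincide with $\v$.

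The last step is to transfer the minimality statement across the isometric diffeomorphism. Since the metric $|\ \ |_V$ on $\TI_V$ and the metric $|\ \ |_{\epsilon_V}$ on $(T\Delta\cdot\epsilon_V)_{\epsilon_V}$ were designed to make $V\mapsto\epsilon_V$ an isometric diffeomorphism, we have $|\v|_V=|\VV|_{\epsilon_V}$, and lengths of curves in $\ii(\h)_V$ coincide with the lengths of their images in $\Delta\cdot\epsilon_V$. The preceding theorem asserts that $t\mapsto e^{it\AA_0}\epsilon_V e^{-it\AA_0}$ has minimal length along its path in $\Delta\cdot\epsilon_V$ for $|t|\le\pi/(2|\VV|_{\epsilon_V})$; pulled back to $\ii(\h)$ this is exactly the claimed minimality of $\delta(t)$ for $|t|\le\pi/(2|\v|_V)$.

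There is no real obstacle here; the only point requiring care is the bookkeeping verifying that a diagonal lifting of $\VV$ in the $2\times 2$ model corresponds, entry by entry, to a selfadjoint lifting $(X_0,Y_0)$ of $\v$ in the sense of (\ref{metrica V}), and that the respective quotient norms agree. Both follow by unwinding the definitions together with the isometric diffeomorphism already recorded in the paper.
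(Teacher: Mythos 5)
Your proposal is correct and follows essentially the same route as the paper: the corollary is deduced from the preceding theorem by unwinding the diagonal minimal lifting $\AA_0=\mathrm{diag}(X_0,Y_0)$ and transporting the minimality statement through the isometric diffeomorphism $V\mapsto\epsilon_V$ between $\ii(\h)_V$ and $\Delta\cdot\epsilon_V$, exactly as the paper intends (it leaves this translation implicit). Your bookkeeping — that a diagonal selfadjoint lifting of $\VV$ corresponds to a pair $(X_0,Y_0)$ lifting $\v$ in the sense of (\ref{metrica V}) and that the two quotient norms agree — is the right point to check and is handled correctly.
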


\section{Initial and final projections}
If $V\in\ii(\h)$, denote by $\alpha(V)=V^*V$ and $\omega(V)=VV^*$ the initial and final projections of $V$. Denote by $\p(\h)$ the space of (orthogonal) projections of $\b(\h)$. The space of projections of a C$^*$-algebra has been well studied, as a complemented submanifold of the algebra, and as an homogeneous space of the inner action of the unitary group of the algebra ($u\cdot p=upu^*$, if $u$ is unitary and $p$ is a projection). It has also been studied as a  Finsler metric space, where each tangent space is endowed with the usual norm of the algebra (see \cite{pr} and \cite{cpr}). In the specific case of the algebra $\b(\h)$, existence of minimal geodesics with given initial conditions or with given  endpoints, have been characterized (see the references above, or \cite{p-q} for the specific case of the algebra $\b(\h)$). 

Clearly, the maps
$$
\alpha:\ii(\h)\to \p(\h),\  \alpha(V)=V^*V\ \ \hbox{ and }\ \ \omega:\ii(\h)\to \p(\h),\  \omega(V)=VV^*
$$
are $C^\infty$. Let us show that, if $\p(\h)$ is given the above mentioned Finsler metric, i.e., the usual norm at every tangent space, and $\ii(\h)$ is considered with the quotient metric studied here, then both maps $\alpha$ and $\omega$ decrease distances.
This fact is based in Lemma \ref{area 51}.
As said above, we consider $\ii(\h)$ and $\p(\h)$ as metric spaces, with their given Finsler metrics. Recall how a Finsler metric in the tangent spaces induces a metric in the original space: if $M$ is a manifold with a Finsler metric $|\ \ |_m$ at $(TM)_m$ (for $m\in M$), then
$$
d_M(m_1,m_2)=\inf\{ \ell(\gamma): \gamma(t)\in M, t\in [a,b], \gamma \hbox{ is smooth }, \gamma(a)=m_1, \gamma(b)=m_2\},
$$
where
$$
\ell(\gamma)=\int_a^b |\dot{\gamma}(t)|_{\gamma(t)} d t.
$$

\begin{prop}\label{contracciones}
The maps 
$$
\alpha:\ii(\h)\to\p(\h) , \ \alpha(V)=V^*V
$$
and
$$
\omega:\ii(\h)\to\p(\h) , \ \omega(V)=VV^*
$$
are distance decreasing, i.e., if $V_1, V_2\in\ii(\h)$, $E_i=\alpha(V_i)$, $F_i=\omega(V_i)$, $i=1,2$, then
$$
d_{\p(\h)}(E_1,E_2)\le d_{\ii(\h)}(V_1,V_2) \ \ \ \hbox{ and }\ \ \ d_{\p(\h)}(F_1,F_2)\le d_{\ii(\h)}(V_1,V_2).
$$
\end{prop}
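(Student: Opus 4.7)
The plan is to reduce the distance inequality to a pointwise contractivity statement on tangent maps, since both spaces carry a length‐metric induced by their Finsler norms. Concretely, I would first show that the differentials
\[
(T\alpha)_V : \TI_V \to (T\p(\h))_{V^*V}, \qquad (T\omega)_V : \TI_V \to (T\p(\h))_{VV^*}
\]
are contractive, i.e.\ $\|(T\alpha)_V(\v)\| \le |\v|_V$ and $\|(T\omega)_V(\v)\| \le |\v|_V$ for every $\v \in \TI_V$. Once this is established, for any smooth curve $\gamma:[a,b]\to \ii(\h)$ joining $V_1$ and $V_2$, the curve $\alpha\circ\gamma$ is smooth in $\p(\h)$ joining $E_1=\alpha(V_1)$ to $E_2=\alpha(V_2)$, and
\[
\ell(\alpha\circ\gamma) = \int_a^b \|(T\alpha)_{\gamma(t)}(\dot\gamma(t))\|\,dt \le \int_a^b |\dot\gamma(t)|_{\gamma(t)}\,dt = \ell(\gamma),
\]
so taking the infimum over $\gamma$ yields $d_{\p(\h)}(E_1,E_2) \le d_{\ii(\h)}(V_1,V_2)$, and the same argument with $\omega$ in place of $\alpha$ gives the second inequality.

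For the pointwise contractivity, I would begin with a direct computation: if $(A,B)$ is any lifting of $\v$, so that $\v = iAV - iVB$ with $A^*=A$, $B^*=B$, then $\v^* = -iV^*A + iBV^*$, and a short calculation gives
\[
(T\alpha)_V(\v) = \v^*V + V^*\v = i[B,V^*V], \qquad (T\omega)_V(\v) = \v V^* + V\v^* = i[A,VV^*].
\]
(One should note in passing that although these expressions mention $A$ or $B$ individually, the value is independent of the chosen lifting: if $(A',B')$ is another lifting, the difference $(A-A',B-B')$ lies in the isotropy Lie algebra $\iota_{V_0}$, and taking adjoints in $(A-A')V = V(B-B')$ shows that $B-B'$ commutes with $V^*V$ and $A-A'$ commutes with $VV^*$.)

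Now the key estimate comes from Lemma \ref{area 51}. Writing $B$ as a $2\times 2$ matrix with respect to the projection $E:=V^*V$, one gets
\[
[B,E] = \begin{pmatrix} 0 & -B_{12} \\ B_{12}^* & 0 \end{pmatrix},
\]
and Lemma \ref{area 51} immediately gives $\|[B,E]\| \le \|B\|$. Hence
\[
\|(T\alpha)_V(\v)\| = \|[B,V^*V]\| \le \|B\| \le \max\{\|A\|,\|B\|\} = \|(A,B)\|,
\]
and taking the infimum over all liftings $(A,B)$ of $\v$ yields $\|(T\alpha)_V(\v)\| \le |\v|_V$. The same argument with $B$ replaced by $A$ and $E$ replaced by $VV^*$ gives $\|(T\omega)_V(\v)\| \le |\v|_V$, completing the proof.

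The main obstacle, if any, is really a bookkeeping matter rather than a conceptual one: verifying that $(T\alpha)_V(\v) = i[B,V^*V]$ depends only on $\v$ (not on the chosen lifting), so that the subsequent bound by $\|B\|$ and then by the infimum over liftings is legitimate. After that, the contractivity of the differential is a one‐line consequence of Lemma \ref{area 51}, and the passage to distances is the standard length‐space argument.
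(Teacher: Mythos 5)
Your proof is correct and follows essentially the same route as the paper: reduce to contractivity of the tangent maps, compute $(T\alpha)_V(\v)=i[B,V^*V]$ from a lifting, bound the commutator by $\|B\|$ via Lemma \ref{area 51}, and take the infimum over liftings. Your added remark on lifting-independence is harmless but unnecessary, since $\v^*V+V^*\v$ is already expressed purely in terms of $\v$.
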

\begin{proof}
We reason with the map $\alpha$ (the argument with $\omega$ is similar). It suffices to show that the tangent maps $(T\alpha)_V:\TI_V\to ({\rm T}\p(\h))_{\alpha(V)}$,
$$
(T\alpha)_V(\v)=\v^*V+V^*\v
$$
are contractive. Pick a pair $(iX,iY)$, $X^*=X, Y^*=Y$ which lifts $\v$, i.e., $\v=iXV-iVY$.
Then
$$
(T\alpha)_V(\v)=(-iV^*X+iYV^*)V+V^*(iXV-iVY)=iY\alpha(V)-i\alpha(V)Y=i[Y,\alpha(V)].
$$
Note that the matrix of $[Y,\alpha(V)]$ in terms of the projection $\alpha(V)$ is
$$
\left(\begin{array}{cc} Y_{11} & Y_{12} \\ Y_{12}^* & Y_{22} \end{array}\right) 
\left(\begin{array}{cc} 1 & 0 \\ 0 & 0 \end{array}\right)-\left(\begin{array}{cc} 1 & 0 \\ 0 & 0 \end{array}\right)\left(\begin{array}{cc} Y_{11} & Y_{12} \\ Y_{12}^* & Y_{22} \end{array}\right)=\left(\begin{array}{cc} 0 & -Y_{12} \\ Y_{12}^* & 0 \end{array}\right),
$$
whose norm equals the norm of 
$$
\left(\begin{array}{cc} 0 & Y_{12} \\ Y_{12}^* & 0 \end{array}\right)=\left(\begin{array}{cc} 0 & -Y_{12} \\ Y_{12}^* & 0 \end{array}\right)\left(\begin{array}{cc} 1 & 0 \\ 0 & -1 \end{array}\right)=\left(\begin{array}{cc} 0 & -Y_{12} \\ Y_{12}^* & 0 \end{array}\right)(2\alpha(V)-1),
$$
bacause $2\alpha(V)-1$ is a unitary operator.
By Lemma \ref{area 51} 
$$
\Big\|\left(\begin{array}{cc} 0 & -Y_{12} \\ Y_{12}^* & 0 \end{array}\right)\Big\| \le \|Y\|\le \max\{\|X\|, \|Y\|\}=\|(X,Y)\|.
$$
Since this holds for any pair $(X,Y)$ which lifts $\v$, we have that
$$
|\v|_V=\inf\{\|(X,Y)\|: iXV-iVY=\v\}\ge \|(T\alpha)_V(\v)\|,
$$
as claimed. 
\end{proof}
\subsection{Balanced isometries}
Recall that $V\in\ii(\h)$ is called {\it balanced} if $n(V)=r^\perp(V)$. 
 
In some special directions, more can be said about minimal liftings and geodesics at balanced isometries. 
\begin{defi}
Let $V\in\ii(\h)$ be balanced, with inititial space $R(\alpha(V))=\s_i$ and final space $R(\omega(V))=\s_f$. We call a tangent vector $\v\in\TI_V$ {\rm orthogonal at } $V$ if $\v$  as an operator in $\h$, satisfies that $\v(\s_i)\perp\s_f$. Equivalently, $\omega(V)\v \alpha(V)=0$
\end{defi}
Let us construct explicit minimal liftings for tangent vectors which are othogonal to $V$.
\begin{enumerate}
\item
Let $\v\in\TI_V$ such that $\omega(V)V\alpha(V)=0$. First, using the left action of $\u(\h)\times\u(\h)$, we may suppose without loss of generality that $V=P_0$ is an orthogonal projection. Indeed, there exist $U,W\in\u(\h)$ such that $V=UP_0W^*$. Then $\v_0:=U^*\v W\in \TI_{P_0}$ satisfies
$$
0=\omega(V)\v\alpha(V)=VV^*\v V^*V=UP_0W^*(UP_0W^*)^*\v(UP_0W^*)^*UP_0W^*=UP_0U^*\v WP_0W^*
$$
$$
=UP_0\v_0P_0W^*,
$$
and thus, $P_0\v_0 P_0=0$. Suppose that we find $(A,B)$ a minimal lifting for $\v_0$ at $P_0$, i.e., $iAP_0-iP_0B=\v_0$ with 
$$
\|(A,B)\|\le \|(A,B)+(X,Y)\| \ \hbox{ for all } (iX,iY)\in\iota_{P_0}.
$$
Then $(UAU^*,W^*BW)$ is a lifting for $U\v_0W^*=\v$ at $V$: 
$$
iUAU^*V-iVW^*BW=U\{iAU^*VW^*-iU^*VWB\}W^*=U\{iAP_0-iP_0B\}W^*=U\v_0W^*=\v,
$$
which is minimal
$$
\|(UAU^*,W^*BW)\|=\|(A,B)\|\le\|(A,B)+(X,Y)\|=\|(UAU^*,W^*BW)+(UXU^*,W^*YW)\|,
$$
where $(UXU^*,W^*YW)$ parametrizes all elements in $Ad(U,W)(\iota_{P_0})=\iota_{UP_0W^*}=\iota_V$.
\item
Let us construct a minimal lifting for $\v_0$ at $P_0$ (with $P_0\v_0 P_0=0$). Let $(A,B)$, $A^*=A$, $B^*=B$ such that $iAP_0-iP_0B=\v_0$. Then $P_0AP_0=P_0BP_0$. Pick
$$
A_0=P_0AP_0^\perp+P_0^\perp AP_0  \ \hbox{ and } \ B_0=P_0BP_0^\perp+P_0^\perp BP_0.
$$
Clearly $A_0^*=A_0$ and $B_0^*=B_0$. Also, since $P_0AP_0=P_0BP_0$, after elementary computations,
$$
\v_0=iAP_0-iP_0B=iA_0P_0-iP_0B_0.
$$
Finally, if $(A',B')$ is another lifting of $\v_0$, then
$$
P_0\v_0P_0^\perp=P_0(iA'P_0-P_0B'P_0^\perp)P_0^\perp=iP_0B'P_0^\perp,
$$
i.e., $P_0B'P_0^\perp=P_0B_0P_0^\perp$, and therefore also 
$$
P_0^\perp B'P_0=(P_0B'P_0^\perp)^*=(P_0 B_0P_0^\perp)^*=P_0^\perp B_0P_0.
$$
That is, in matrices in terms of $P_0$, $B'$ and $B_0$ have the same off-diagonal entries. Clearly the same happens for $A'$ and $A_0$.
By Lemma \ref{area 51}, since $A_0$ and $B_0$ are codiagonal,
$$
\|A_0\|\le\| A'\| \ \hbox{ and } \ \|B_0\|\le \|B'\|,
$$
i.e., $(A_0,B_0)$ is a minimal lifting.
\end{enumerate}
These special (co-diagonal, minimal) liftings just exhibited for these special velocities, have  the following  property:
\begin{prop}
Let $V\in\ii(\h)$ and $\v\in\TI_V$ such that $\v$ is orthogonal to $\v$ (i.e., $\omega(V)\v\alpha(V)=0$). Pick $(A_0,B_0)$ a codiagonal minimal lifting of $\v$ as above. Then the curve $\delta(t)=e^{itA_0}Ve^{-itB_0}$ (minimal along its path up to $|t|\le\frac{\pi}{2|\v|_V}$), verifies that the  initial and final projection curves
$$
\alpha(\delta), \omega(\delta)\in\p(\h)
$$
are minimal along their paths in $\p(\h)$, for $|t|\le \frac{\pi}{2\|B_0\|}$ and $|t|\le \frac{\pi}{2\|A_0\|}$, respectively.
\end{prop}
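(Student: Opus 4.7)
The plan is to show that both projection curves $\alpha(\delta(t))$ and $\omega(\delta(t))$ coincide with standard geodesics in $\p(\h)$ of the form $e^{itZ}Pe^{-itZ}$ with $Z$ codiagonal with respect to $P$, and then invoke the classical minimality theorem for such geodesics from \cite{cpr}.

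The first step is a direct computation exploiting the fact that the two exponentials sitting at $e^{-itB_0}$ and $e^{itB_0}$ (respectively $e^{itA_0}, e^{-itA_0}$) cancel when forming $\delta(t)^*\delta(t)$ (respectively $\delta(t)\delta(t)^*$):
\begin{equation*}
\alpha(\delta(t)) = \delta(t)^*\delta(t) = e^{itB_0}\,\alpha(V)\,e^{-itB_0}, \qquad \omega(\delta(t)) = \delta(t)\delta(t)^* = e^{itA_0}\,\omega(V)\,e^{-itA_0}.
\end{equation*}
Thus both images are inner one-parameter orbits of projections.

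The second step is to check the codiagonal structure of the exponents. In the model case $V=P_0$, the construction of the codiagonal minimal lifting explicitly gives $A_0=P_0AP_0^\perp+P_0^\perp AP_0$ and $B_0=P_0BP_0^\perp+P_0^\perp BP_0$, so that $P_0A_0P_0=P_0^\perp A_0P_0^\perp=0$ and the same for $B_0$. For the general case $V=UP_0W^*$, the lifting obtained from the $\u(\h)\times\u(\h)$ reduction has the form $(UA_0U^*,\,WB_0W^*)$, and since $\omega(V)=UP_0U^*$ and $\alpha(V)=WP_0W^*$, inner conjugation by $U$ (resp.\ $W$) transports the codiagonal condition with respect to $P_0$ into the codiagonal condition with respect to $\omega(V)$ (resp.\ $\alpha(V)$).

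The third step is to apply the Corach--Porta--Recht minimality result \cite{cpr}: if $Z=Z^*\in\b(\h)$ is codiagonal with respect to a projection $P$, then the curve $e^{itZ}Pe^{-itZ}$ is minimal along its path in $\p(\h)$ for the operator-norm Finsler metric, on the interval $|t|\le\pi/(2\|Z\|)$. A short matrix computation in the decomposition induced by $P$, analogous to the one in the proof of Proposition \ref{contracciones} and relying on Lemma \ref{area 51}, shows that the Finsler norm of the initial velocity $i[Z,P]$ equals $\|Z\|$ when $Z$ is codiagonal, so these bounds are sharp. Applied to $Z=B_0$ and $Z=A_0$ this yields minimality of $\alpha(\delta)$ for $|t|\le\pi/(2\|B_0\|)$ and of $\omega(\delta)$ for $|t|\le\pi/(2\|A_0\|)$, as asserted.

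I do not expect any substantive obstacle; the whole argument is an assembly of the two identities above, the transport of codiagonality under the isometric action of $\u(\h)\times\u(\h)$, and the quoted theorem from \cite{cpr}. The only point worth stressing is that the time bounds here may strictly exceed $\pi/(2|\v|_V)$, since $|\v|_V=\max\{\|A_0\|,\|B_0\|\}$ dominates each of $\|A_0\|$ and $\|B_0\|$ individually; so minimality of the images under $\alpha$ and $\omega$ persists on a possibly larger interval than does the minimality of $\delta$ itself.
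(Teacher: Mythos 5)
Your proof is correct and follows essentially the same route as the paper: compute $\alpha(\delta(t))=e^{itB_0}\alpha(V)e^{-itB_0}$ and $\omega(\delta(t))=e^{itA_0}\omega(V)e^{-itA_0}$, transport the codiagonality of $A_0,B_0$ from the model case $V=P_0$ via the unitary action, and invoke the Porta--Recht/Corach--Porta--Recht minimality theorem for geodesics of projections. Your closing observation that the intervals $|t|\le\pi/(2\|A_0\|)$ and $|t|\le\pi/(2\|B_0\|)$ may strictly exceed $\pi/(2|\v|_V)$ is accurate and consistent with the statement.
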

\begin{proof}
Note that 
$$
\alpha(\delta)(t)=\delta^*(t)\delta(t)=(e^{itA_0}Ve^{-itB_0})^*e^{itA_0}Ve^{-itB_0}=e^{-itB_0}V^*Ve^{-itB_0}=e^{-itB_0}\alpha(V)e^{-itB_0},
$$
with $B_0$ co-diagonal with respect to $\alpha(V)$. Indeed, with the same argument as above, it suffices to consider  $V=P_0$, in which case it is evident. Therefore (see \cite{pr}), $\alpha(\delta)$ is minimal in $\p(\h)$ for $|t|\le\frac{\pi}{2\|B_0\|}$. The argument with $\omega(\delta)$ is analogous.
\end{proof}

In other words, for balanced isometries, and velocities which are orthogonal to $V$, locally, moving from $V_0$ to $V_1$ optimally in $\ii(\h)$,  involves the optimal paths for the initial and final spaces of $V_0$ and $V_1$.

Recall from Theorem \ref{comparacion} the comparison between the Finsler norm of $\v$ at $V$ and the ambient norm of $\v$: $|\v|_V\le\|\v\|$. Note  that, for  velocities which are orthogonal, at balanced partial isometries, both norms coincide:
\begin{rem}\label{coincidencia}
Let $V\in\ii(\h)$ and $\v\in\TI_V$ such that $\v$ is orthogonal to $V$. Then 
$$
|\v|_V=\|(A_0,B_0)\|=\max\{\|A_0\|,\|B_0\|\}=\|\v\|,
$$
its norm as an element in $\b(\h)$. Indeed, again it suffices to reason in the case $V=P_0$. 
As seen in the discussion  preceding the above proposition, $\v$ has co-diagonal matrix in terms of $P_0$:
$$
\v=\left(\begin{array}{cc} 0 & iB_0 \\ iA_0 & 0 \end{array}\right).
$$
Then 
$$
\|\v\|^2=\|\left(\begin{array}{cc} 0 & iB_0 \\ iA_0 & 0 \end{array}\right)^*\left(\begin{array}{cc} 0 & iB_0 \\ iA_0 & 0 \end{array}\right) \|=\|\left(\begin{array}{cc} A_0^2 & 0 \\ 0 & B_0^2 \end{array}\right)\|=\max\{\|A_0\|^2,\|B_0\|^2\}.
$$
\end{rem}
If $\gamma(t)\in\ii(\h)$, $t\in I$ is smooth, denote by $\ell_\infty(\gamma)$ the length of $\gamma$ with the metric induced by the ambient norm of $\b(\h)$:
\begin{equation}\label{metrica norma}
\ell_\infty(\gamma)=\int_I \|\dot{\gamma}(t)\| d t.
\end{equation}
\begin{coro}
Let $V\in\ii(\h)$ be balanced, and $\v\in\TI_V$ orthogonal to $V$. Let $(A,B)$ be a minimal lifting for $\v$. Then 
$$
\delta(t)=e^{itA}Ve^{-itB}
$$
is minimal along its path, for $|t|\le\frac{\pi}{2\|\v\|}$, when the lengths of curves are measured as in (\ref{metrica norma}), with the $\ell_\infty$ functional.
\end{coro}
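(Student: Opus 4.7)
The plan is to sandwich $\delta$ between two length functionals using three ingredients already established in the paper: the universal inequality $|\cdot|_V\le\|\cdot\|$ (Theorem \ref{comparacion}); the equality $|\v|_V=\|\v\|$ for orthogonal velocities at balanced partial isometries (Remark \ref{coincidencia}); and the quotient-metric minimality of $\delta$ (Corollary \ref{corolario 47}). The key point will be to show that along the entire curve $\delta$ the Finsler quotient length and the ambient $\ell_\infty$-length actually coincide, so that a one-line chain of inequalities finishes the argument. Concretely, since $A$ commutes with $e^{itA}$ and $B$ with $e^{-itB}$, one has $\dot{\delta}(t)=e^{itA}(iAV-iVB)e^{-itB}=e^{itA}\,\v\, e^{-itB}$, so $\|\dot{\delta}(t)\|=\|\v\|$ for every $t$ and $\ell_\infty(\delta|_{[t_0,t_1]})=(t_1-t_0)\|\v\|$.

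Next I would verify that the hypotheses of Remark \ref{coincidencia} propagate along $\delta$. From $\alpha(\delta(t))=e^{itB}\alpha(V)e^{-itB}$ and $\omega(\delta(t))=e^{itA}\omega(V)e^{-itA}$ one computes $\omega(\delta(t))\dot{\delta}(t)\alpha(\delta(t))=e^{itA}\omega(V)\v\alpha(V)e^{-itB}=0$, so orthogonality is preserved; and $\delta(t)$ stays in the $\u(\h)\times\u(\h)$-orbit of $V$ and hence in its connected component, so it remains balanced. Combined with the $\u(\h)\times\u(\h)$-invariance of $|\cdot|_V$, Remark \ref{coincidencia} then yields $|\dot{\delta}(t)|_{\delta(t)}=\|\dot{\delta}(t)\|=\|\v\|$ for every $t$, and in particular $\ell(\delta|_{[t_0,t_1]})=\ell_\infty(\delta|_{[t_0,t_1]})$.

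To finish, let $\gamma$ be any smooth curve in $\ii(\h)$ joining $\delta(t_0)$ to $\delta(t_1)$ with $[t_0,t_1]\subset[-\frac{\pi}{2\|\v\|},\frac{\pi}{2\|\v\|}]=[-\frac{\pi}{2|\v|_V},\frac{\pi}{2|\v|_V}]$. Theorem \ref{comparacion} gives $\ell(\gamma)\le\ell_\infty(\gamma)$, while Corollary \ref{corolario 47} gives $\ell(\delta|_{[t_0,t_1]})\le\ell(\gamma)$, so chaining produces $\ell_\infty(\delta|_{[t_0,t_1]})=\ell(\delta|_{[t_0,t_1]})\le\ell(\gamma)\le\ell_\infty(\gamma)$, which is the claim. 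The main (mild) obstacle is the propagation step: confirming that both \emph{balanced} and \emph{orthogonal at $V$} survive under the flow, so that Remark \ref{coincidencia} applies at every $t$ and not only at $t=0$; everything else reduces to the sandwich estimate above.
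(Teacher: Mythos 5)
Your proposal is correct and follows essentially the same route as the paper: minimality of $\delta$ for the quotient length $\ell$ via Corollary \ref{corolario 47}, the identity $\ell(\delta)=\ell_\infty(\delta)$ obtained from the invariance of the Finsler metric together with $|\v|_V=\|\v\|$ for orthogonal vectors at balanced partial isometries (Remark \ref{coincidencia}), and the comparison $\ell(\gamma)\le\ell_\infty(\gamma)$ from Theorem \ref{comparacion}. Your additional check that orthogonality and balancedness propagate along $\delta(t)$ is harmless but redundant once invariance reduces everything to $t=0$; note also that you state the comparison inequality in the correct direction, $\ell(\gamma)\le\ell_\infty(\gamma)$, which the corresponding displayed formula in the paper's proof inadvertently reverses.
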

\begin{proof}
By Corollary \ref{corolario 47}, we know that $\delta$ is minimal for the $\ell$ functional, for $|t|\le\frac{\pi}{2|\v|_V}=\frac{\pi}{2\|\v\|}$, because $|\v|_V=\|\v\|$. For an arbitrary smooth curve $\gamma$ in $\ii(\h)$, Theorem \ref{comparacion} implies that
$$
\ell_\infty(\gamma)=\int_I\|\dot{\gamma}(t)\| d t\le \int_I|\dot{\gamma}(t)|_{\gamma(t)} d t =\ell(\gamma).
$$
Note that
$$
\dot{\delta}(t)=e^{itA}\{iAV-iVB\}e^{-itB}=e^{itA}\v e^{-itB}.
$$
Then, the facts that the metric $|\ \ |_V$ is invariant under the action of $\u(\h)\times\u(\h)$, and that $\v$ is orthogonal, imply that 
$$
|\dot{\delta}(t)|_{\delta(t)}=|\v|_V=\|\v\|=\|e^{itA}\v e^{-itB}\|=\|\dot{\delta}(t)\|,
$$
and therefore $\ell(\delta)=\ell_\infty(\delta)$.
\end{proof}
Clearly $\p(\h)$ is a complemented submanifold of $\ii(\h)$. We shall prove another consequence of Remark \ref{coincidencia}:  if $P_0, P_1\in\p(\h)$ are regarded as points in $\ii(\h)$, and they can be joined by a minimal  geodesic in $\p(\h)$, then this path is minimal between $P_0$ and $P_1$ in $\ii(\h)$.

Before, let us  recall the necessary and sufficient condition that $P_0, P_1$ must satisfy in order that they can be joined by a minimal geodesic of $\p(\h)$ (see \cite{p-q}):
\begin{rem}
Let $P_0,P_1\in\p(\h)$, then there exists a minimal geodesic of $\p(\h)$ (or in fact, any geodesic) joining $P_0$ and $P_1$ if and only if 
\begin{equation}\label{condicion p}
\dim\left(R(P_0)\cap N(P_1)\right)=\dim\left(R(P_1)\cap N(P_0)\right).
\end{equation}
\end{rem}

\begin{coro}
Let $P_0, P_1\in\p(\h)$ satisfy condition {\rm (\ref{condicion p})}. Let $\delta(t)\in\p(\h)$ be a geodesic joining $\delta(0)=P_0$ and $\delta(1)=P_1$, and $\gamma(t)\in\ii(\h)$ be any other smooth  curve joining $P_0$ and $P_1$. Then
$$
\ell(\delta)\le\ell(\gamma).
$$
\end{coro}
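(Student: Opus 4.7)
The plan is to chain two earlier results: the equality $|\v|_V = \|\v\|$ for velocities orthogonal to a balanced partial isometry (Remark \ref{coincidencia}) and the distance-decreasing property of the initial-projection map $\alpha$ (Proposition \ref{contracciones}).

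First I would parametrize $\delta$ via the Porta--Recht description invoked in the remark preceding the statement: a minimal geodesic in $\p(\h)$ joining $P_0$ and $P_1$ has the form $\delta(t) = e^{itZ}P_0 e^{-itZ}$, $t\in[0,1]$, with $Z^* = Z$ codiagonal with respect to $P_0$ (that is, $P_0 Z P_0 = P_0^\perp Z P_0^\perp = 0$) and $\|Z\| \le \pi/2$. Differentiating gives $\dot{\delta}(t) = e^{itZ}[iZ,P_0]e^{-itZ}$, and a direct computation using the codiagonal form shows $\|[iZ,P_0]\| = \|Z\|$, so that the length of $\delta$ in $\p(\h)$ equals $\|Z\|$.

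The next step is to show that this length coincides with the length of $\delta$ regarded as a curve in $\ii(\h)$ with its quotient Finsler metric. Every $\delta(t)$ is a projection, hence a balanced partial isometry with $\alpha(\delta(t)) = \omega(\delta(t)) = \delta(t)$, and
$$
\delta(t)\,\dot{\delta}(t)\,\delta(t) = e^{itZ}\,P_0[iZ,P_0]P_0\,e^{-itZ} = 0,
$$
so $\dot{\delta}(t)$ is orthogonal to $\delta(t)$ in the sense of the previous subsection. Remark \ref{coincidencia} then gives $|\dot{\delta}(t)|_{\delta(t)} = \|\dot{\delta}(t)\| = \|Z\|$, and integrating over $[0,1]$ yields $\ell_{\ii(\h)}(\delta) = \|Z\| = \ell_{\p(\h)}(\delta)$.

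To conclude, let $\gamma : [0,1] \to \ii(\h)$ be any smooth curve from $P_0$ to $P_1$. Then $\alpha \circ \gamma$ is a smooth curve in $\p(\h)$ with $(\alpha\circ\gamma)(0) = P_0$ and $(\alpha\circ\gamma)(1) = P_1$, so Proposition \ref{contracciones} (more precisely, the pointwise contractivity of $T\alpha$ established in its proof) gives $\ell_{\p(\h)}(\alpha\circ\gamma) \le \ell_{\ii(\h)}(\gamma)$; and the assumed minimality of $\delta$ in $\p(\h)$ gives $\ell_{\p(\h)}(\delta) \le \ell_{\p(\h)}(\alpha\circ\gamma)$. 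Chaining these inequalities,
$$
\ell_{\ii(\h)}(\delta) = \ell_{\p(\h)}(\delta) \le \ell_{\p(\h)}(\alpha\circ\gamma) \le \ell_{\ii(\h)}(\gamma),
$$
which is the desired bound. I do not expect a serious obstacle: once Remark \ref{coincidencia} and Proposition \ref{contracciones} are in hand, the argument is essentially assembly, and the single nontrivial verification, the orthogonality identity displayed above, is immediate because $\delta(t)$ is a projection.
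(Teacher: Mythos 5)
Your proposal is correct and follows essentially the same route as the paper: establish that the tangent vectors of $\delta$ are orthogonal in the sense of Remark \ref{coincidencia} so that the Finsler length of $\delta$ in $\ii(\h)$ equals its length in $\p(\h)$, then combine the minimality of $\delta$ in $\p(\h)$ with the contractivity of $\alpha$ from Proposition \ref{contracciones}. The only cosmetic difference is that you verify the codiagonality of $\dot{\delta}(t)$ via the explicit exponential parametrization $e^{itZ}P_0e^{-itZ}$, whereas the paper obtains it directly for any curve of projections by differentiating $P^2(t)=P(t)$.
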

\begin{proof}
First, note that if $P$ is a projection and $\v\in\left({\rm T}\p(\h)\right)_P$, then $\v$ is $P$-co-diagonal: $P\v P=0$ (or, in the notation employed here, $\v$ is {\it orthogonal at} $P$, regarded as an element in $\ii(\h)$). This basic fact is well known in the geometry of $\p(\h)$ (see \cite{cpr}): if $P(t)$ is a smooth curve in $\p(\h)$ with $P(0)=P$ and $\dot{P}(0)=\v$, then differentiating $P^2(t)=P(t)$ yields (at $t=0$)
$$
\v P+P\v=\v,
$$
which implies $P\v P=P^\perp \v P^\perp=0$. Therefore, by Remark \ref{coincidencia}, $|\v|_P=\|\v\|$. It follows that $\ell_\infty(\delta)=\ell(\delta)$. On the other hand, by Proposition \ref{contracciones},
$$
\ell_\infty\left(\alpha(\gamma)\right)\le \ell(\gamma);
$$
since $\delta$ is minimal in $\p(\h)$,
$$
\ell(\delta)=\ell_\infty(\delta)\le\ell_\infty\left(\alpha(\gamma)\right),
$$
and the proof follows.
\end{proof}

{\bf Acknowledgements}

This work was supported by the grant PICT 2019 04060 (FONCyT - ANPCyT, Argentina)

\end{document}